\newtheorem{theorem}{Theorem}[section]
\newtheorem{lemma}[theorem]{Lemma}
\newtheorem{proposition}[theorem]{Proposition}
\newtheorem{corollary}[theorem]{Corollary}
\theoremstyle{definition}
\newtheorem{definition}[theorem]{Definition}
\theoremstyle{remark}
\numberwithin{equation}{section}
\begin{document}
	
	\setcounter{page}{1}
	
	\title[Weaving continuous generalized frames for operators]{Weaving continuous generalized frames for operators}
	
\author[H. Massit, M. Rossafi, C.  Park]{Hafida Massit$^{1}$, Mohamed Rossafi$^{2}$ and Choonkil Park$^{3*}$}

\address{$^{1}$Department of Mathematics, Faculty Of Sciences, University of Ibn Tofail, Kenitra, Morocco}
\email{\textcolor[rgb]{0.00,0.00,0.84}{massithafida@yahoo.fr}}
\address{$^{2}$Department of Mathematics,  Faculty of Sciences, Dhar El Mahraz University Sidi Mohamed Ben Abdellah, Fes, Morocco}
\email{\textcolor[rgb]{0.00,0.00,0.84}{rossafimohamed@gmail.com; mohamed.rossafi@usmba.ac.ma}}
\address{$^{3}$Research Institute for Natural Sciences, Hanyang University, Seoul 04763,  Korea}
\email{\textcolor[rgb]{0.00,0.00,0.84}{baak@hanyang.ac.kr}}

	\subjclass[2010]{Primary 41A58, 42C15, 46L05.}
	
	\keywords{Continuous $K$-frames, Continuous $g$-frames,  Weaving continuous $K$-$g$-frames, perturbation.}
	
	\date{
		\newline \indent $^{*}$Corresponding author:  Choonkil Park (email: baak@hanyang.ac.kr, orcid: 0000-0001-6329-8228, fax: +82-2-2281-0019).}

	\begin{abstract} Recently, Bemrose et al. \cite{BE}  developed a theory of weaving frames, which  was motivated by a problem regarding distributed signal processing. In this present article, we introduce the atomic $g$-system and we generalize some of the known results in continuous $L$-frames, weaving continuous and weaving continuous $ g$-frames, also we study weaving continuous $ L$-$g$-frames in Hilbert spaces. 
	Moreover, we study the behaviour continuous $ L$-$g$-frames under some perturbations, and  we show that approximate $L$-duals are stable under small perturbation and that it is possible to remove some elements of a woven continuous $ L$-$g$-frame and still have a woven continuous $ L$-$g$-frame.
\end{abstract}

	\maketitle

	\section{Introduction and preliminaries}

	In 1952, Duffin and Schaffer \cite{Duf} introduced the concept of frames in Hilbert spaces to study some problems in nonharmonic Fourier series, their importance in data processing was reintroduced by Daubechies, Grossman and Meyer \cite{DGM}, the theory of frames have found many applications in engineering, but are also important tools in pure mathematics. Frames play key roles in wavelet theory and Gabor frames. The
	concept of a generalization of frames to a family indexed by some locally compact space
	endowed with a Radon measure was proposed by Ali, Antoine and Gazeau \cite{GAZ}. These frames are known as continuous frames. Gabrado and Han  \cite{GHN} called these ``frames associated with measurable spaces". For more about frames and generalizations, see \cite{cc, ns, FR1,  gs, massit, po,   r5, r6, r3, r1,  RFDCA}.

 Motivated by a problem regarding distributed signal processing, Bemrose et al. \cite{BE}  introduced a new concept of weaving frames in separable Hilbert space. The fundamental properties of weaving frames were examined by Casazza and Lynch in \cite{Cas}. Weaving frames were further studied by Casazza, Freeman and Lynch \cite{P.G}.
 
 In this paper, we generalize some results in \cite{K} to the continuous version of weaving continuous $L$-$g$-frames for operators in Hilbert spaces. In 2012, G\v{a}vruta introduced new kinds of frames for operators (or
 $ L$-frames), while studying the atomic systems with respect to a bounded operator $ L $.

Throughout this paper, we suppose  that $ \mathcal{H} $ is a separable Hilbert space, $\{ \mathcal{H}_{\varsigma}, \varsigma \in \mathfrak{A} \}$ is a sequence of separable Hilbert spaces, $ I $  is the identity operator on $ \mathcal{H}$,
	 $(\mathfrak{A},\mu)$  is  a measure space with positive measure $\mu$. 
	Also, for every $ \varsigma\in\mathfrak{A} $, $ B(\mathcal{H}, \mathcal{H}_{\varsigma}) $ is the set of all bounded linear operators from $ \mathcal{H} $ to $ \mathcal{H}_{\varsigma} $, let $ L \in B(\mathcal{H})$, with closed range and $ GL(\mathcal{H}) =\{L\in B(\mathcal{H}): \;L \; is \;invertible\}$.

A family of vectors $F= \{F_{\varsigma}\}_{\varsigma\in \mathfrak{A}} $ in a separable Hilbert $ \mathcal{H}$ is said to be a continuous frame if there exist constants $ 0< A \leq B <\infty $ such that 
\begin{equation}\label{eq1}
	A \Vert f\Vert^{2}\leq \int_{\mathfrak{A}} \vert \langle f,  F_{\varsigma} \rangle \vert^{2} d\mu(\varsigma) \leq B \Vert f\Vert^{2}, \;\forall f\in\mathcal{H},
\end{equation}
and then the constants $ A $ and $ B $ are called frame bounds. 

The family $ \{F_{\varsigma}\}_{\varsigma \in \mathfrak{A}} $ is said to be a Bessel sequence whenever in (\ref{eq1}), the right hand side holds. In the case of $ A=  B =1  $, $ \{F_{\varsigma}\}_{\varsigma \in \mathfrak{A}} $ is  called a Parseval frame. If  $ A=  B  $, then  it is called a tight frame.

A family of vectors $F= \{F_{\varsigma}\}_{\varsigma\in \mathfrak{A}} $ in a separable Hilbert $ \mathcal{H}$ is said to be a continuous $ L- $frame if there exist constants $ 0< A\leq B <\infty $ such that 
\begin{equation*}\label{eq}
	A \Vert L^{*}f\Vert^{2}\leq \int_{\mathfrak{A}} \vert \langle f,  F_{\varsigma} \rangle \vert^{2} d\mu(\varsigma) \leq B \Vert f\Vert^{2}, \;\forall f\in\mathcal{H}.
\end{equation*}
A sequence $ \chi= \{\chi_{\varsigma} \in B(\mathcal{H},\mathcal{H}_{\varsigma}) ,\;\varsigma \in \mathfrak{A}\} $
is called a $ g- $frame for $ \mathcal{H} $ with respect to $ \{\mathcal{H}_{\varsigma},\; \varsigma \in\mathfrak{A}\} $ if there exist $ 0< A\leq B <\infty $ such that for every $ f\in \mathcal{H} $
\begin{equation}\label{2}
	A \Vert f\Vert^{2}\leq \int_{\mathfrak{A}} \Vert  \chi_{\varsigma}f \Vert^{2} d\mu(\varsigma) \leq B \Vert f\Vert^{2}, \;\forall f\in\mathcal{H},
\end{equation}
and then $ A,B $ are called $ g- $frame bounds. The family $ \{\chi_{\varsigma}\}_{\varsigma \in \mathfrak{A}} $ is said to be a $ g-$Bessel sequence whenever in (\ref{2}), the right hand side holds. In the case of $ A =  B =1  $, $ \{\chi_{\varsigma}\}_{\varsigma \in \mathfrak{A}} $  is called a continuous Parseval $ g- $ frame. If $ A =  B  $, then  it is called a tight continuous $ g- $frame.

For every sequence $ \{\mathcal{H}_{\varsigma}\}_{\varsigma\in\mathfrak{A}} $,  let  
\begin{equation*}
(	\int_{\mathfrak{A}} \bigoplus \mathcal{H}_{\varsigma}d\mu(\varsigma))_{L^{2}}= \{(F_{\varsigma})_{\varsigma\in\mathfrak{A}},\; F_{\varsigma} \in \mathcal{H}_{\varsigma},\; \int_{\mathfrak{A}}\Vert F_{\varsigma}\Vert ^{2} d\mu(\varsigma)< \infty\},
\end{equation*}
with pointwise operations and then  the following inner product space is a Hilbert space 
\begin{equation*}
	\langle (F_{\varsigma})_{\varsigma\in\mathfrak{A}}, (G_{\varsigma})_{\varsigma\in\mathfrak{A}}\rangle = \int_{\mathfrak{A}}\langle F_{\varsigma},G_{\varsigma}\rangle d\mu(\varsigma).
	\end{equation*}
If $ \chi $ is a continuous $ g- $Bessel sequence, then the synthesis operator for $ \chi $ is the linear operator 
\begin{equation*}
	T_{\chi}: (	\int_{\mathfrak{A}} \bigoplus \mathcal{H}_{\varsigma}d\mu(\varsigma))_{L^{2}} \rightarrow \mathcal{H},\;\;\;\; T_{\chi}(F_{\varsigma})_{\varsigma\in\mathfrak{A}}=\int_{\mathfrak{A}}\chi_{\varsigma}^{\ast}F_{\varsigma}d\mu(\varsigma).
\end{equation*}
The adjoint of the synthesis operator is called the analysis operator and is defined by 
\begin{equation*}
	T_{\chi}^{\ast}:\mathcal{H}\rightarrow (	\int_{\mathfrak{A}} \bigoplus \mathcal{H}_{\varsigma}d\mu(\varsigma))_{L^{2}}  ,\;\;\;\; T_{\chi}^{\ast}(f)=(\chi_{\varsigma}^{\ast}f)_{\varsigma\in \mathfrak{A}}.
\end{equation*}
We call $ S_{\chi}= T_{\chi}T_{\chi}^{\ast} $ the $ g- $frame operator of $ \chi $ and $ S_{\chi}f= \int_{\mathfrak{A}}\chi_{\varsigma}^{\ast}\chi_{\varsigma}f d\mu(\varsigma), \;\;f\in \mathcal{H} $.

If $ \chi= (\chi_{\varsigma})_{\varsigma \in \mathfrak{A}}$ is a $ g- $frame with lower and upper $ g- $frame bounds $ A,\;B $, respectively, then the $ g- $frame operator of $ \chi $ is a bounded, positive and invertible operator on $ \mathcal{H} $ and 
\begin{equation*}
	A\langle f,f\rangle \leq \langle S_{\chi} f,f \rangle \leq B\langle f,f\rangle,\;\;f\in\mathcal{H},
\end{equation*}
and so 
\begin{equation*}
	A\cdot I\leq S_{\chi} \leq B\cdot I.
\end{equation*}
Let $ L \in B(\mathcal{H}) $. A sequence $ \chi= \{\chi_{\varsigma} \in B(\mathcal{H},\mathcal{H}_{\varsigma}), \varsigma \in \mathfrak{A} \} $ is called a continuous $ L-g- $frame if there exist constants $ 0<A\leq B<\infty $ such that 
\begin{equation}
A \Vert L^{\ast}f\Vert^{2}\leq \int_{\mathfrak{A}} \Vert  \chi_{\varsigma}f \Vert^{2} d\mu(\varsigma) \leq B \Vert f\Vert^{2}, \;\forall f\in\mathcal{H}.
\end{equation}

\section{Main results}

In \cite{K},  Khosravi and Banyarani defined the atomic $ g- $system for an operator $ L\in B(\mathcal{H}) $ and by using this idea we introduce the following definition. 

\begin{definition} Let $ L\in B(\mathcal{H}) $. A sequence $ \{\chi_{\varsigma}\in B(\mathcal{H},\mathcal{H}_{\varsigma}),\varsigma\in\mathfrak{A}\} $ is called an atomic $ g- $system for $ L $ if the following hold: 
	\begin{itemize}
		\item [(1)] $\{\chi_{\varsigma}\}_{\varsigma\in\mathfrak{A}}   $ is a continuous $ g- $Bessel sequence.
		\item[(2)]For any $ f\in\mathcal{H} $, there exists $\mathcal{W}_{f}= (w_{\varsigma})_{\varsigma}\in (\int_{\mathfrak{A}} \bigoplus \mathcal{H}_{\varsigma}d\mu(\varsigma))_{L^{2}} $ such that $ Lf= \int_{\mathfrak{A}}\chi_{\varsigma}^{\ast}(w_{\varsigma})d\mu(\varsigma) $, where $ \Vert \mathcal{W}_{f} \Vert \leq \alpha \Vert f\Vert$ and $ \alpha $ is a positive constant.
	\end{itemize}  
	\end{definition}

\begin{definition} Let $ \chi=\{\chi_{\varsigma}\in B(\mathcal{H},\mathcal{H}_{\varsigma}),\varsigma\in\mathfrak{A}\} $ and $ \xi=\{\xi_{\varsigma}\in B(\mathcal{H},\mathcal{H}_{\varsigma}), \varsigma\in \mathfrak{A} \} $  be two continuous $ g- $frames for $ \mathcal{H} $. We call $ \{\chi_{\varsigma}\}_{\varsigma \in\mathfrak{A}} $
	and $ \{\xi_{\varsigma}\}_{\varsigma \in\mathfrak{A}} $ woven continuous $ g- $frames if there exist	$ 0< A\leq B <\infty $ such that for every $ \mathcal{J}\subset \mathfrak{A} $ and  \for all $ f\in\mathcal{H} $ 
	\begin{equation*}
		A \Vert f\Vert^{2}\leq \int_{\mathcal{J}} \Vert  \chi_{\varsigma}f \Vert^{2} d\mu(\varsigma)+ \int_{\mathcal{J}^{C}} \Vert  \xi_{\varsigma}f \Vert^{2} d\mu(\varsigma)\leq B \Vert f\Vert^{2}.	
	\end{equation*}
	We  say that $ \{\chi_{\varsigma}\}_{\varsigma \in\mathfrak{A}} $
	and $ \{\xi_{\varsigma}\}_{\varsigma \in\mathfrak{A}} $ are  $ (A,B)- $woven 
continuous $ g- $frames.
\end{definition}

\begin{definition} Let $ L\in B(\mathcal{H}) $, $ \chi=\{\chi_{\varsigma}\in B(\mathcal{H},\mathcal{H}_{\varsigma}),\varsigma\in\mathfrak{A}\} $ and $ \xi=\{\xi_{\varsigma}\in B(\mathcal{H},\mathcal{H}_{\varsigma}), \varsigma\in \mathfrak{A} \} $  be two continuous $L- g- $frames for $ \mathcal{H} $. We call $ \{\chi_{\varsigma}\}_{\varsigma \in\mathfrak{A}} $
	and $ \{\xi_{\varsigma}\}_{\varsigma \in\mathfrak{A}} $ woven continuous $ g- $frames if there exist	$ 0< A\leq B <\infty $ such that for every $ \mathcal{J}\subset \mathfrak{A} $ and  \for all $ f\in\mathcal{H} $ 
	\begin{equation*}
		A \Vert L^{\ast}f\Vert^{2}\leq \int_{\mathcal{J}} \Vert  \chi_{\varsigma}f \Vert^{2} d\mu(\varsigma)+ \int_{\mathcal{J}^{C}} \Vert  \xi_{\varsigma}f \Vert^{2} d\mu(\varsigma)\leq B \Vert f\Vert^{2}.	
	\end{equation*}
	We  say that $ \{\chi_{\varsigma}\}_{\varsigma \in\mathfrak{A}} $
	and $ \{\xi_{\varsigma}\}_{\varsigma \in\mathfrak{A}} $ are  $ (A,B)- $woven continuous $L- g- $frames.
\end{definition}
We try to generalize some of the known results in continuous $ L- $frames, 
weaving continuous frames and weaving continuous $ g- $frames and also we study weaving continuous $ L-g- $frames.

\begin{definition} Let $ L\in B(\mathcal{H}) $. Sequences $ \{\chi_{\varsigma}\} _{\varsigma\in\mathfrak{A}} $ and $ \{\xi_{\varsigma}\} _{\varsigma\in\mathfrak{A}} $ are  called continuous atomic $ g- $systems for $ L $ if the following hold: 
	\begin{itemize}
		\item [(1)] $ \{\chi_{\varsigma}\} _{\varsigma\in\mathfrak{A}} $ and $ \{\xi_{\varsigma}\} _{\varsigma\in\mathfrak{A}} $  are continuous $ g- $Bessel sequences.
		\item[(2)]For any $ f\in\mathcal{H} $ and any $ \mathcal{J}\subset \mathfrak{A} $, there exist $\mathcal{W}_{f}= (w_{\varsigma})_{\varsigma},\;\mathcal{W'}_{f}= (w'_{\varsigma})_{\varsigma} \in (\int_{\mathfrak{A}} \bigoplus \mathcal{H}_{\varsigma}d\mu(\varsigma))_{L^{2}} $ such that 
	$$	 Lf=  \int_{\mathcal{J}}   \chi_{\varsigma}^{\ast}(w_{\varsigma})  d\mu(\varsigma)+ \int_{\mathcal{J}^{C}}   \xi_{\varsigma}^{\ast}(w'_{\varsigma})  d\mu(\varsigma) $$  
		 with $ \Vert \mathcal{W}_{f} \Vert \leq \alpha_{1} \Vert f\Vert$ and $ \Vert \mathcal{W'}_{f} \Vert \leq \alpha_{2} \Vert f\Vert$, ( $ \alpha_{1}\geq 0 $ and $ \alpha_{2} \geq 0 $).
	\end{itemize}  
	\end{definition} 

\begin{theorem} Let  $ \{\chi_{\varsigma}\in B(\mathcal{H},\mathcal{H}_{\varsigma}),\;{\varsigma\in\mathfrak{A}} \}  $ and $ \{\xi_{\varsigma}\in B(\mathcal{H},\mathcal{H}_{\varsigma}),\;{\varsigma\in\mathfrak{A}} \}  $ be a continuous woven atomic $ g- $system for $ L $. Then $ \{\chi_{\varsigma}\in B(\mathcal{H},\mathcal{H}_{\varsigma}),\;{\varsigma\in\mathfrak{A}} \}  $ and $ \{\xi_{\varsigma}\in B(\mathcal{H},\mathcal{H}_{\varsigma}),\;{\varsigma\in\mathfrak{A}} \}  $ are woven $ L-g- $frames. 
	\end{theorem}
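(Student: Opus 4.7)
The plan is to verify both bounds in Definition~2.3. The upper bound is essentially free from hypothesis (1): since $\{\chi_\varsigma\}$ and $\{\xi_\varsigma\}$ are continuous $g$-Bessel sequences with bounds $B_1$ and $B_2$, for any measurable $\mathcal{J}\subset\mathfrak{A}$ and $f\in\mathcal{H}$ one trivially has
$$\int_{\mathcal{J}}\|\chi_\varsigma f\|^2\,d\mu(\varsigma) + \int_{\mathcal{J}^C}\|\xi_\varsigma f\|^2\,d\mu(\varsigma) \le (B_1+B_2)\|f\|^2,$$
so $B := B_1+B_2$ works uniformly in $\mathcal{J}$. All the work is in the lower bound.

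For the lower bound, the strategy is to reach $\|L^*g\|^2$ through the identity $\|L^*g\|^2=\langle LL^*g,g\rangle$, and then expand $LL^*g$ using the atomic reconstruction (2). Given $g\in\mathcal{H}$ and $\mathcal{J}\subset\mathfrak{A}$, apply the atomic $g$-system property to the vector $f := L^*g$: this yields sequences $\mathcal{W}_f=(w_\varsigma)$ and $\mathcal{W}'_f=(w'_\varsigma)$ in the direct integral Hilbert space, with $\|\mathcal{W}_f\|\le\alpha_1\|L^*g\|$ and $\|\mathcal{W}'_f\|\le\alpha_2\|L^*g\|$, such that
$$LL^*g \;=\; \int_{\mathcal{J}}\chi_\varsigma^{\ast}(w_\varsigma)\,d\mu(\varsigma)+\int_{\mathcal{J}^C}\xi_\varsigma^{\ast}(w'_\varsigma)\,d\mu(\varsigma).$$
Pairing with $g$ and moving $\chi_\varsigma^*, \xi_\varsigma^*$ across the inner product turns the left side into $\|L^*g\|^2$ and the right side into an integral of $\langle w_\varsigma,\chi_\varsigma g\rangle$ over $\mathcal{J}$ plus one of $\langle w'_\varsigma,\xi_\varsigma g\rangle$ over $\mathcal{J}^C$.

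The core estimate is then a double application of Cauchy--Schwarz: first inside each direct integral to produce
$$\|L^*g\|^2 \le \|\mathcal{W}_f\|\left(\int_{\mathcal{J}}\|\chi_\varsigma g\|^2 d\mu\right)^{\!1/2} + \|\mathcal{W}'_f\|\left(\int_{\mathcal{J}^C}\|\xi_\varsigma g\|^2 d\mu\right)^{\!1/2},$$
and then the elementary $a_1b_1+a_2b_2\le\sqrt{a_1^2+a_2^2}\sqrt{b_1^2+b_2^2}$ to combine the two pieces. Inserting the bounds on $\|\mathcal{W}_f\|,\|\mathcal{W}'_f\|$ yields
$$\|L^*g\|^2 \le \sqrt{\alpha_1^{2}+\alpha_2^{2}}\;\|L^*g\|\;\sqrt{\int_{\mathcal{J}}\|\chi_\varsigma g\|^2 d\mu + \int_{\mathcal{J}^C}\|\xi_\varsigma g\|^2 d\mu}.$$
Dividing by $\|L^*g\|$ (the case $L^*g=0$ being trivial) and squaring produces the desired lower bound with $A := 1/(\alpha_1^2+\alpha_2^2)$.

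I expect the only subtle point to be the logical ordering: the reconstruction sequences $\mathcal{W}_f,\mathcal{W}'_f$ depend on both $f$ \emph{and} $\mathcal{J}$, but the constants $\alpha_1,\alpha_2$ given by Definition~2.4 do not, which is precisely what guarantees the uniformity in $\mathcal{J}$ required by Definition~2.3. The only other thing to double-check is the legitimacy of moving $\chi_\varsigma^*, \xi_\varsigma^*$ through the inner product under the direct integral, but this is immediate from the definition of the synthesis/analysis operators recalled in the preliminaries.
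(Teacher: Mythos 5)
Your proof is correct and follows essentially the same route as the paper: both arguments feed the atomic reconstruction identity into an inner product with a test vector and then apply Cauchy--Schwarz twice (once inside each direct integral, once to merge the two pieces) to extract the lower weaving bound, with the $\mathcal{J}$-independence of $\alpha_1,\alpha_2$ supplying the required uniformity. The only (harmless) differences are that you evaluate $\|L^*g\|^2=\langle LL^*g,g\rangle$ by decomposing $L(L^*g)$ directly rather than writing $\|L^*f\|=\sup_{\|g\|=1}|\langle f,Lg\rangle|$ and decomposing $Lg$ as the paper does, which yields the marginally sharper constant $1/(\alpha_1^2+\alpha_2^2)$ in place of the paper's $1/(\alpha_1+\alpha_2)^2$, and that you spell out the Bessel upper bound, which the paper leaves implicit.
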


\begin{proof} Let $ f\in\mathcal{H} $. For every $ g\in \mathcal{H} $ with $ \Vert g\Vert =1 $ and every $ \mathcal{J} $, there exist  $ (w_{\varsigma})_{\varsigma},\; (w'_{\varsigma})_{\varsigma} \in (\int_{\mathfrak{A}} \bigoplus \mathcal{H}_{\varsigma}d\mu(\varsigma))_{L^{2}} $ such that
$	 Lg=  \int_{\mathcal{J}}   \chi_{\varsigma}^{\ast}(w_{\varsigma})  d\mu(\varsigma)+ \int_{\mathcal{J}^{C}}   \xi_{\varsigma}^{\ast}(w'_{\varsigma})  d\mu(\varsigma) $. Then 
\begin{align*}
 \Vert L^{\ast}f \Vert&= sup_{\Vert g \Vert =1} \vert \langle L^{\ast}f,g\rangle \vert = sup_{\Vert g \Vert =1} \vert \langle f, \int_{\mathfrak{J}}   \chi_{\varsigma}^{\ast}(w_{\varsigma})  d\mu(\varsigma)+ \int_{\mathcal{J}^{C}}   \xi_{\varsigma}^{\ast} w'_{\varsigma}  d\mu(\varsigma)\rangle \vert\\
&\leq sup_{\Vert g \Vert =1} \vert \langle f,\int_{\mathcal{J}}   \chi_{\varsigma}^{\ast} w_{\varsigma}  d\mu(\varsigma)\rangle \vert +sup_{\Vert g \Vert =1} \vert \langle f,\int_{\mathcal{J^{C}}}   \xi_{\varsigma}^{\ast} w'_{\varsigma} d\mu(\varsigma)\rangle \vert\\
&=   sup_{\Vert g \Vert =1} \vert  \int_{\mathcal{J}}\langle   \chi_{\varsigma}f,w_{\varsigma} \rangle d\mu(\varsigma)\vert +sup_{\Vert g \Vert =1} \vert \int_{\mathcal{J^{C}}}  \langle  \xi_{\varsigma}f,w'_{\varsigma}\rangle  d\mu(\varsigma) \vert  \\
&\leq sup_{\Vert g \Vert =1} (  \int_{\mathcal{J}}  \Vert\chi_{\varsigma}f\Vert ^{2}  d\mu(\varsigma))^{\frac{1}{2}}(  \int_{\mathcal{J}}  \Vert w_{\varsigma}\Vert ^{2}  d\mu(\varsigma))^{\frac{1}{2}} +sup_{\Vert g \Vert =1} (  \int_{\mathcal{J^{C}}}  \Vert\xi_{\varsigma}f\Vert ^{2}  d\mu(\varsigma))^{\frac{1}{2}}(  \int_{\mathcal{J^{C}}}  \Vert w'_{\varsigma}\Vert ^{2}  d\mu(\varsigma))^{\frac{1}{2}} \\
&\leq sup_{\Vert g \Vert =1} (  \int_{\mathcal{J}}  \Vert\chi_{\varsigma}f\Vert ^{2}  d\mu(\varsigma)+(\int_{\mathcal{J^{C}}}  \Vert\xi_{\varsigma}f\Vert ^{2}  d\mu(\varsigma))^{\frac{1}{2}}[( \int_{\mathfrak{A}}  \Vert w_{\varsigma}\Vert ^{2}  d\mu(\varsigma))^{\frac{1}{2}} +(  \int_{\mathfrak{A}}  \Vert w'_{\varsigma}\Vert ^{2}  d\mu(\varsigma))^{\frac{1}{2}} ]\\
& \leq (\alpha_{1}+\alpha_{2})sup_{\Vert g \Vert =1}\Vert g\Vert ( \int_{\mathcal{J}}  \Vert\chi_{\varsigma}f\Vert ^{2}  d\mu(\varsigma)+\int_{\mathcal{J^{C}}}  \Vert\xi_{\varsigma}f\Vert ^{2}  d\mu(\varsigma))^{\frac{1}{2}}.
\end{align*}
Therefore, $ \int_{\mathcal{J}}  \Vert\chi_{\varsigma}f\Vert ^{2}  d\mu(\varsigma)+\int_{\mathcal{J^{C}}}  \Vert\xi_{\varsigma}f\Vert ^{2}  d\mu(\varsigma)  \geq \dfrac{1}{(\alpha_{1}+\alpha_{2})^{2}}\Vert L^{\ast}f \Vert ^{2}$. 	
\end{proof}

Let $ \chi=\{\chi_{\varsigma}\}_{\varsigma \in\mathfrak{A}} $ and  $\xi= \{\xi_{\varsigma}\}_{\varsigma \in\mathfrak{A}} $
	be continuous $ g- $Bessel sequences, with bounds $ B,\;B' $, respectively. Then the operator $ S_{\xi,\chi}:\mathcal{H}\rightarrow \mathcal{H}  $ defined by 
	\begin{equation*}
		S_{\xi,\chi}(f)= T_{\xi}T_{\chi}^{\ast}(f)= \int_{\mathfrak{A}}\xi_{\varsigma}^{\ast}\chi_{\varsigma}(f)d\mu(\varsigma), \;\;\;f\in\mathcal{H}
	\end{equation*}
is a bounded linear operator with $ \Vert S_{\xi,\chi} \Vert \leq \sqrt{BB'}$, $ S_{\xi,\chi}= S_{\xi,\chi}^{\ast}$ and $ S_{\xi}=S_{\xi,\xi} $.

\begin{lemma}  Let $ \chi=\{\chi_{\varsigma}\}_{\varsigma \in\mathfrak{A}} $ and  $\xi= \{\xi_{\varsigma}\}_{\varsigma \in\mathfrak{A}} $
	be continuous $ g- $Bessel sequences. If there exists $ \gamma>0 $ such that  $ S_{\xi,\chi}(f)\geq \gamma \Vert L^{\ast}f\Vert $, then 
$ \chi=\{\chi_{\varsigma}\}_{\varsigma \in\mathfrak{A}} $ and  $\xi= \{\xi_{\varsigma}\}_{\varsigma \in\mathfrak{A}} $ are $ L-g- $frames.	
	\end{lemma}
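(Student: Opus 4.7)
The plan is to exploit the factorization $S_{\xi,\chi}=T_{\xi}T_{\chi}^{\ast}$, the operator norm bounds coming from the Bessel hypothesis, and the self-adjointness $S_{\xi,\chi}=S_{\xi,\chi}^{\ast}$ recalled in the paragraph just before the lemma, in order to extract the two lower $L$-$g$-frame bounds from the single hypothesis $\Vert S_{\xi,\chi}f\Vert\ge\gamma\Vert L^{\ast}f\Vert$. Since the upper $L$-$g$-frame inequalities for both families follow directly from the Bessel hypothesis (with bounds $B$ for $\chi$ and $B'$ for $\xi$), the entire task reduces to producing positive constants $A_{1},A_{2}$ such that $A_{1}\Vert L^{\ast}f\Vert^{2}\le\int_{\mathfrak{A}}\Vert\chi_{\varsigma}f\Vert^{2}d\mu(\varsigma)$ and $A_{2}\Vert L^{\ast}f\Vert^{2}\le\int_{\mathfrak{A}}\Vert\xi_{\varsigma}f\Vert^{2}d\mu(\varsigma)$ hold for every $f\in\mathcal{H}$.

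For the lower bound on $\chi$, I would chain the hypothesis with a submultiplicativity estimate:
\begin{equation*}
\gamma\Vert L^{\ast}f\Vert\le\Vert S_{\xi,\chi}f\Vert=\Vert T_{\xi}T_{\chi}^{\ast}f\Vert\le\Vert T_{\xi}\Vert\,\Vert T_{\chi}^{\ast}f\Vert\le\sqrt{B'}\,\Bigl(\int_{\mathfrak{A}}\Vert\chi_{\varsigma}f\Vert^{2}d\mu(\varsigma)\Bigr)^{1/2},
\end{equation*}
using that $\Vert T_{\xi}\Vert=\Vert T_{\xi}^{\ast}\Vert\le\sqrt{B'}$ by the Bessel bound for $\xi$ and that $\Vert T_{\chi}^{\ast}f\Vert^{2}=\int_{\mathfrak{A}}\Vert\chi_{\varsigma}f\Vert^{2}d\mu(\varsigma)$ by definition of the analysis operator. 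Squaring and rearranging yields $\int_{\mathfrak{A}}\Vert\chi_{\varsigma}f\Vert^{2}d\mu(\varsigma)\ge(\gamma^{2}/B')\,\Vert L^{\ast}f\Vert^{2}$, which together with the Bessel upper bound is exactly the $L$-$g$-frame condition for $\chi$.

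For the symmetric half I would invoke the identity $S_{\xi,\chi}=S_{\xi,\chi}^{\ast}$ stated in the paragraph preceding the lemma. Since in general $S_{\xi,\chi}^{\ast}=(T_{\xi}T_{\chi}^{\ast})^{\ast}=T_{\chi}T_{\xi}^{\ast}$, this identity lets me rewrite the hypothesis as $\Vert T_{\chi}T_{\xi}^{\ast}f\Vert\ge\gamma\Vert L^{\ast}f\Vert$, and the very same chain of inequalities with the roles of $\chi$ and $\xi$ interchanged gives $\int_{\mathfrak{A}}\Vert\xi_{\varsigma}f\Vert^{2}d\mu(\varsigma)\ge(\gamma^{2}/B)\,\Vert L^{\ast}f\Vert^{2}$. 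I expect the only real delicate point to be precisely this invocation of self-adjointness: without it the single hypothesis controls only the $\chi$-side of the conclusion, and one would otherwise need a twin assumption of the form $\Vert S_{\chi,\xi}f\Vert\ge\gamma'\Vert L^{\ast}f\Vert$ in order to reach the same conclusion for $\xi$.
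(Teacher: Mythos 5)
Your first half is correct and is essentially the paper's own argument in operator--norm clothing: the paper writes $\Vert S_{\xi,\chi}f\Vert=\sup_{\Vert g\Vert=1}\vert\langle\int_{\mathfrak{A}}\xi_{\varsigma}^{\ast}\chi_{\varsigma}(f)d\mu(\varsigma),g\rangle\vert$ and applies Cauchy--Schwarz together with the Bessel bound for $\xi$, which is exactly your chain $\Vert T_{\xi}T_{\chi}^{\ast}f\Vert\le\Vert T_{\xi}\Vert\,\Vert T_{\chi}^{\ast}f\Vert\le\sqrt{B'}\bigl(\int_{\mathfrak{A}}\Vert\chi_{\varsigma}f\Vert^{2}d\mu(\varsigma)\bigr)^{1/2}$; both yield the lower $L$-$g$-frame bound $\gamma^{2}/B'$ for $\chi$.

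The second half has a genuine gap. The identity $S_{\xi,\chi}=S_{\xi,\chi}^{\ast}$ that you invoke is indeed asserted in the paper's preceding paragraph, but it is false for two distinct Bessel families: the adjoint of $S_{\xi,\chi}=T_{\xi}T_{\chi}^{\ast}$ is $T_{\chi}T_{\xi}^{\ast}=S_{\chi,\xi}$, and there is no reason for $T_{\xi}T_{\chi}^{\ast}$ to be self-adjoint. Concretely, take $\mathcal{H}=\mathcal{H}_{1}=\mathbb{C}^{2}$, $\mathfrak{A}=\{1\}$ with counting measure, $L=L^{\ast}$ the orthogonal projection onto $e_{1}$, $\chi_{1}=I$, and $\xi_{1}$ the operator with $\xi_{1}e_{1}=0$, $\xi_{1}e_{2}=e_{1}$. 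Both families are Bessel, and $S_{\xi,\chi}f=\xi_{1}^{\ast}f=f_{1}e_{2}$, so $\Vert S_{\xi,\chi}f\Vert=\vert f_{1}\vert=\Vert L^{\ast}f\Vert$ and the hypothesis holds with $\gamma=1$; yet $\int_{\mathfrak{A}}\Vert\xi_{\varsigma}e_{1}\Vert^{2}d\mu=0$ while $\Vert L^{\ast}e_{1}\Vert=1$, so $\xi$ is not an $L$-$g$-frame. Thus the step fails and, in fact, the $\xi$-half of the conclusion cannot be recovered from the stated hypothesis at all. Your closing sentence diagnoses the situation correctly --- one needs the twin assumption $\Vert S_{\chi,\xi}f\Vert\ge\gamma'\Vert L^{\ast}f\Vert$ to control the $\xi$-side --- but you then set that diagnosis aside in favor of the unusable self-adjointness claim. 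For what it is worth, the paper's own proof shares the defect: it establishes only the $\chi$ lower bound and then asserts ``$S_{\xi,\chi}^{\ast}=S_{\chi,\xi}$ and so \dots an $L$-$g$-frame,'' which is a non sequitur, since a lower bound for an operator does not transfer to its adjoint.
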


\begin{proof} Suppose that there exists a number $ \gamma>0 $ such that	
 for all $ f\in \mathcal{H} $, $ S_{\xi,\chi}(f)\geq \gamma \Vert L^{\ast}f\Vert $ and 
then we have 
\begin{align*}
\gamma\Vert L^{\ast} f\Vert 	&\leq \Vert S_{\xi,\chi}(f)\Vert \\
&= sup_{\Vert g \Vert= 1}\vert \langle  \int_{\mathfrak{A}}\xi^{\ast}_{\varsigma}\chi_{\varsigma}(f)d\mu(\varsigma),g\rangle \vert\\
&\leq sup_{\Vert g\Vert =1}(\int_{\mathfrak{A}}\Vert \chi_{\varsigma}(f)\Vert 
^{2}d\mu(\varsigma))^{\frac{1}{2}}(\int_{\mathfrak{A}}\Vert \xi_{\varsigma}(g)\Vert^{2} d\mu (\varsigma) )^{\frac{1}{2}}\\
&\leq \sqrt{B}(\int_{\mathfrak{A}}\Vert \chi_{\varsigma}(f)\Vert^{2}d\mu(\varsigma))^{\frac{1}{2}}.
\end{align*}
Hence
\begin{equation*}
	(\dfrac{\gamma^{2}}{B})\Vert L^{\ast}f\Vert^{2}\leq \int_{\mathfrak{A}}\Vert \chi_{\varsigma}(f)\Vert^{2}d\mu(\varsigma).
\end{equation*}
	
On the other hand, we have $ S_{\xi,\chi}^{\ast} = S_{\chi,\xi}$ and so $( \chi_{\varsigma})_{\varsigma} $ is an $ L-g- $frame.
\end{proof}

 \begin{proposition} Let $\chi=\{\chi_{\varsigma}\in B(\mathcal{H},\mathcal{H}_{\varsigma})\} _{\varsigma\in\mathfrak{A}} $,  $\xi=\{\xi_{\varsigma}\in B(\mathcal{H},\mathcal{H}_{\varsigma})\} _{\varsigma\in\mathfrak{A}} $,  $\chi'=\{\chi'_{\varsigma}\in B(\mathcal{H},\mathcal{H'}_{\varsigma})\} _{\varsigma\in\mathfrak{A}} $
 and  $\xi'=\{\xi'_{\varsigma}\in B(\mathcal{H},\mathcal{H}_{\varsigma})\} _{\varsigma\in\mathfrak{A}} $ be $ g- $Bessel sequences with bounds $ B_{1}, \;B_{2}, \;B_{3}, \;B_{4} $, respectively. If there exists $ \gamma>0 $	such that
  $$   \Vert (S_{\chi,\chi'}^{\mathcal{J}}+S_{\xi,\xi'}^{\mathcal{J}^{C}})f \Vert \geq \gamma \Vert L^{\ast}f\Vert$$   
  for $ \mathcal{J}\subset \mathfrak{A} $ and $ f\in\mathcal{H} $, then $ \{\chi_{\varsigma}^{'}\}_{\varsigma \in\mathfrak{A}} $ and $ \{\xi_{\varsigma}^{'}\}_{\varsigma\in\mathfrak{A}} $ are woven continuous $ L-g- $frames and also  $ \{\chi'_{\varsigma}\}_{\varsigma \in\mathfrak{A}} $ and $ \{\xi'_{\varsigma}\}_{\varsigma\in\mathfrak{A}} $ are woven continuous $ L-g- $frames.
 \end{proposition}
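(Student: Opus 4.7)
The plan is to adapt the argument of the preceding lemma to the woven setting, splitting each side into a piece over $\mathcal{J}$ and a piece over $\mathcal{J}^{C}$. The upper woven bound $(B_{3}+B_{4})\|f\|^{2}$ follows at once from the Bessel inequalities for $\chi'$ and $\xi'$, so the substance of the proof lies in extracting the lower bound from the hypothesis.

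Fix $f\in\mathcal{H}$ and $\mathcal{J}\subset\mathfrak{A}$. The first step is to rewrite the norm in the hypothesis by duality:
$$\gamma\,\|L^{\ast}f\|\ \leq\ \|(S_{\chi,\chi'}^{\mathcal{J}}+S_{\xi,\xi'}^{\mathcal{J}^{C}})f\|\ =\ \sup_{\|g\|=1}\Bigl|\int_{\mathcal{J}}\langle \chi'_{\varsigma}f,\chi_{\varsigma}g\rangle\,d\mu(\varsigma)+\int_{\mathcal{J}^{C}}\langle \xi'_{\varsigma}f,\xi_{\varsigma}g\rangle\,d\mu(\varsigma)\Bigr|,$$
using the unfolding $\langle S_{\chi,\chi'}^{\mathcal{J}}f,g\rangle=\int_{\mathcal{J}}\langle \chi'_{\varsigma}f,\chi_{\varsigma}g\rangle\,d\mu(\varsigma)$. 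Then I would apply Cauchy--Schwarz inside each integral to obtain a product of $L^{2}$-norms, and follow with the elementary inequality $ac+bd\leq \sqrt{a^{2}+b^{2}}\sqrt{c^{2}+d^{2}}$ to split the four resulting factors into an $f$-block carrying $\chi'$ and $\xi'$ and a $g$-block carrying $\chi$ and $\xi$. On the $g$-side, the Bessel bounds $B_{1},B_{2}$ give $\int_{\mathcal{J}}\|\chi_{\varsigma}g\|^{2}d\mu+\int_{\mathcal{J}^{C}}\|\xi_{\varsigma}g\|^{2}d\mu\leq(B_{1}+B_{2})\|g\|^{2}$. Taking the supremum over unit $g$ and squaring produces
$$\frac{\gamma^{2}}{B_{1}+B_{2}}\,\|L^{\ast}f\|^{2}\ \leq\ \int_{\mathcal{J}}\|\chi'_{\varsigma}f\|^{2}d\mu(\varsigma)+\int_{\mathcal{J}^{C}}\|\xi'_{\varsigma}f\|^{2}d\mu(\varsigma),$$
which together with the easy upper bound shows that $\{\chi'_{\varsigma}\}$ and $\{\xi'_{\varsigma}\}$ are woven continuous $L$-$g$-frames.

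For the companion statement---which, reading past the apparent typographical duplication, should be about $\{\chi_{\varsigma}\}$ and $\{\xi_{\varsigma}\}$---I would exploit the adjoint identity $(S_{\chi,\chi'}^{\mathcal{J}})^{\ast}=S_{\chi',\chi}^{\mathcal{J}}$ to promote the hypothesis to the adjoint operator and then replay the same duality plus Cauchy--Schwarz computation with primed and unprimed roles swapped, now placing the Bessel bounds $B_{3},B_{4}$ on the $g$-side to obtain the lower constant $\gamma^{2}/(B_{3}+B_{4})$. The main obstacle is precisely this transfer: a pointwise estimate $\|Tf\|\geq \gamma\,\|L^{\ast}f\|$ does not in general descend to $\|T^{\ast}f\|\geq \gamma\,\|L^{\ast}f\|$, so one must either verify the adjoint inequality directly from the partial $T_{\bullet}T_{\bullet}^{\ast}$-structure of $S_{\chi,\chi'}^{\mathcal{J}}+S_{\xi,\xi'}^{\mathcal{J}^{C}}$ or read the hypothesis as an operator inequality that dualises cleanly; everything else is a routine re-run of the first half.
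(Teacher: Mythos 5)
Your proof of the first conclusion is correct and is essentially the paper's own argument. The paper reaches the same lower bound by estimating $\Vert (S_{\chi,\chi'}^{\mathcal J}+S_{\xi,\xi'}^{\mathcal J^{C}})f\Vert \le \Vert T_{\chi}\Vert\,(\int_{\mathcal J}\Vert\chi'_{\varsigma}f\Vert^{2}d\mu)^{1/2}+\Vert T_{\xi}\Vert\,(\int_{\mathcal J^{C}}\Vert\xi'_{\varsigma}f\Vert^{2}d\mu)^{1/2}$ and then combining via $\sqrt{B_{1}}a+\sqrt{B_{2}}b\le(\sqrt{B_{1}}+\sqrt{B_{2}})\sqrt{a^{2}+b^{2}}$; your route through the inner-product expansion and Cauchy--Schwarz is the same estimate performed in a different order, and it actually yields the marginally sharper constant $\gamma^{2}/(B_{1}+B_{2})$ in place of the paper's $\gamma^{2}/(\sqrt{B_{1}}+\sqrt{B_{2}})^{2}$. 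Your observation that the upper woven bound comes for free from $B_{3},B_{4}$ is also correct (the paper omits this step entirely).

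For the companion conclusion you have put your finger on a genuine gap, and it is a gap in the paper's proof as well, not a defect peculiar to your attempt. The paper's entire justification is the sentence ``since $S^{\ast}_{\chi,\xi}=S_{\xi,\chi}$, $(S_{\chi,\chi'}^{\mathcal J}+S_{\xi,\xi'}^{\mathcal J^{C}})^{\ast}=S_{\chi',\chi}^{\mathcal J}+S_{\xi',\xi}^{\mathcal J^{C}}$ and we have the result.'' But the hypothesis $\Vert Tf\Vert\ge\gamma\Vert L^{\ast}f\Vert$ for $T=S_{\chi,\chi'}^{\mathcal J}+S_{\xi,\xi'}^{\mathcal J^{C}}$ is the operator inequality $T^{\ast}T\ge\gamma^{2}LL^{\ast}$, whereas rerunning the argument with the roles of the primed and unprimed families exchanged requires $\Vert T^{\ast}f\Vert\ge\gamma'\Vert L^{\ast}f\Vert$, i.e.\ $TT^{\ast}\ge\gamma'^{2}LL^{\ast}$. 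These are not equivalent for a non-normal $T$: if $T$ is injective with closed proper range $R$, then $T^{\ast}$ annihilates $R^{\perp}$, so $\langle TT^{\ast}f,f\rangle=0$ for $f\in R^{\perp}$ while $\Vert L^{\ast}f\Vert$ need not vanish there. So your diagnosis is exactly right: without an additional hypothesis (the lower bound assumed for the adjoint as well, or $T$ normal, or $T$ surjective), neither your argument nor the paper's establishes the second conclusion. In short, the first half of your proposal is complete and matches the paper; the second half is honestly left open by you and is, in fact, not proved in the paper either.
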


\begin{proof}
	Suppose that there is  $ \gamma $ such that for all $ \mathcal{J}\subset \mathfrak{A} $ and $ f\in \mathcal{H} $
		\begin{equation*}
		\gamma \Vert L^{\ast}f\Vert \leq  \Vert (S_{\chi,\chi'}^{\mathcal{J}}+S_{\xi,\xi'}^{\mathcal{J}^{C}})f \Vert . 
	\end{equation*}
Then 
\begin{align*}
	 \Vert (S_{\chi,\chi'}^{\mathcal{J}}+S_{\xi,\xi'}^{\mathcal{J}^{C}})f \Vert &\leq  \Vert S_{\chi,\chi'}^{\mathcal{J}}f \Vert+\Vert S_{\xi,\xi'}^{\mathcal{J}^{C}}f \Vert\\
	 &= \Vert (T_{\chi}T_{\chi'}^{\ast})^{\mathcal{J}}(f)\Vert +\Vert (T_{\xi}T_{\xi'}^{\ast})^{\mathcal{J}^{C}}(f)\Vert\\
	 &\leq \Vert T_{\chi}\Vert (\int_{\mathcal{J}}\Vert \chi'_{\varsigma}f \Vert^{2} d\mu(\varsigma)  )^{\frac{1}{2}}+\Vert T_{\xi}\Vert (\int_{\mathcal{J}^{C}}\Vert \xi'_{\varsigma}f \Vert^{2} d\mu(\varsigma)  )^{\frac{1}{2}}\\
	 &\leq\sqrt{B_{1}}(\int_{\mathcal{J}}\Vert \chi'_{\varsigma}f\Vert^{2} d\mu(\varsigma))^{\frac{1}{2}}+\sqrt{B_{2}}(\int_{\mathcal{J}^{C}}\Vert \xi'_{\varsigma}f\Vert^{2}d\mu(\varsigma))^{\frac{1}{2}}\\
	 &\leq (\sqrt{B_{1}}+\sqrt{B_{2}}) (\int_{\mathcal{J}}\Vert \chi'_{\varsigma}f\Vert^{2}d\mu(\varsigma) +\int_{\mathcal{J}^{C}}\Vert \xi'_{\varsigma}f\Vert^{2} d\mu(\varsigma))^{\frac{1}{2}}.
	\end{align*} 
Hence
$$  \int_{\mathcal{J}}\Vert \chi'_{\varsigma}f\Vert^{2}d\mu(\varsigma)+ \int_{\mathcal{J}^{C}}\Vert \xi'_{\varsigma}f\Vert^{2}d\mu(\varsigma) \geq \dfrac{\gamma^{2}\Vert L^{\ast}f\Vert ^{2}}{(\sqrt{B_{1}}+\sqrt{B_{2}})^{2}}.$$

On the other hand, since $ S^{\ast}_{\chi,\xi}=S_{\xi,\chi} $,   $(S_{\chi,\chi'}^{\mathcal{J}}+S_{\xi,\xi'}^{\mathcal{J}^{C}})^{\ast} = S_{\chi',\chi}^{\mathcal{J}}+S_{\xi',\xi}^{\mathcal{J}^{C}}$ and we have the result.   
\end{proof}

\begin{theorem} Let $\chi=\{\chi_{\varsigma}\in B(\mathcal{H},\mathcal{H}_{\varsigma})\} _{\varsigma\in\mathfrak{A}} $,  $\xi=\{\xi_{\varsigma}\in B(\mathcal{H},\mathcal{H}_{\varsigma})\} _{\varsigma\in\mathfrak{A}} $ be $ (A,\;B) $ woven continuous $ L-g- $frames and  $\chi'=\{\chi'_{\varsigma}\in B(\mathcal{H}',\mathcal{H}'_{\varsigma})\} _{\varsigma\in\mathfrak{A}} $,  $\xi'=\{\xi'_{\varsigma}\in B(\mathcal{H}',\mathcal{H}'_{\varsigma})\} _{\varsigma\in\mathfrak{A}} $ be $ (A,\;B) $ woven continuous $ L-g- $frames.
\begin{itemize}
	\item[(1)] $ \{\chi_{\varsigma}\oplus \chi'_{\varsigma}\}_{\varsigma} $ are $ (min\{A,\;A'\},\; max\{B,\;B'\} )$ woven continuous $ L-g- $frames.
	\item[(2)]If $ \mathcal{H}= \mathcal{H}',\; \mathcal{H}_{\varsigma}= \mathcal{H}'_{\varsigma} $ for each $ \varsigma \in \mathfrak{A} $ and for $ \mathcal{J}\subset \mathfrak{A} $
		$$ S_{\chi',\chi}^{\mathcal{J}}+S_{\chi,\chi'}^{\mathcal{J}}+S_{\xi',\xi}^{\mathcal{J}^{C}}+S_{\xi,\xi'}^{\mathcal{J}^{C}}\geq 0, $$
	then $ \{\chi_{\varsigma}+\chi_{\varsigma}'\}_{\varsigma} $ and $ \{\xi_{\varsigma}+\xi_{\varsigma}'\}_{\varsigma} $ are woven continuous $ L-g- $frames, where 
	
	$ S_{\xi,\xi'}^{\mathcal{J}^{C}}= \int_{\mathcal{J}^{C}} \xi_{\varsigma}^{\ast}\xi_{\varsigma}'d\mu(\varsigma) $.
\end{itemize}
\end{theorem}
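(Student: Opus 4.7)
The plan is to treat the two parts separately, both by direct computation using the weaving inequalities already assumed for the two pairs. Part (1) exploits the fact that an orthogonal direct sum preserves norm-squared identities, whereas Part (2) hinges on the algebraic expansion of $\Vert(\chi_{\varsigma}+\chi'_{\varsigma})f\Vert^{2}$ and the identification of the resulting cross-terms with the bounded operators $S_{\chi,\chi'}^{\mathcal{J}}$, $S_{\chi',\chi}^{\mathcal{J}}$, $S_{\xi,\xi'}^{\mathcal{J}^{C}}$, $S_{\xi',\xi}^{\mathcal{J}^{C}}$ whose sum is assumed to be non-negative.

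For (1), I would fix $(f,f')\in\mathcal{H}\oplus\mathcal{H}'$ and observe that the block operator $\chi_{\varsigma}\oplus\chi'_{\varsigma}$ satisfies $\Vert(\chi_{\varsigma}\oplus\chi'_{\varsigma})(f,f')\Vert^{2}=\Vert\chi_{\varsigma}f\Vert^{2}+\Vert\chi'_{\varsigma}f'\Vert^{2}$ in $\mathcal{H}_{\varsigma}\oplus\mathcal{H}'_{\varsigma}$, and analogously for $\xi_{\varsigma}\oplus\xi'_{\varsigma}$. Integrating over $\mathcal{J}$ and $\mathcal{J}^{C}$ would then split the total weaving sum into the individual weaving sums for $(\chi,\xi)$ at $f$ and for $(\chi',\xi')$ at $f'$. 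Applying the two assumed $L$- and $L'$-weaving inequalities and using
\[A\Vert L^{*}f\Vert^{2}+A'\Vert L'^{*}f'\Vert^{2}\geq\min\{A,A'\}\,\Vert(L\oplus L')^{*}(f,f')\Vert^{2}\]
(and the symmetric inequality for the upper bound) would give the asserted frame bounds with respect to $L\oplus L'$.

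For (2), I would expand, for each $\varsigma$,
\[\Vert(\chi_{\varsigma}+\chi'_{\varsigma})f\Vert^{2}=\Vert\chi_{\varsigma}f\Vert^{2}+\Vert\chi'_{\varsigma}f\Vert^{2}+\langle\chi_{\varsigma}f,\chi'_{\varsigma}f\rangle+\langle\chi'_{\varsigma}f,\chi_{\varsigma}f\rangle,\]
and similarly for $\xi_{\varsigma}+\xi'_{\varsigma}$. Using $\int_{\mathcal{J}}\langle\chi_{\varsigma}f,\chi'_{\varsigma}f\rangle\,d\mu(\varsigma)=\langle S_{\chi',\chi}^{\mathcal{J}}f,f\rangle$ and its three companions, the total weaving sum for $\chi_{\varsigma}+\chi'_{\varsigma}$ and $\xi_{\varsigma}+\xi'_{\varsigma}$ would decompose into the weaving sum for $(\chi,\xi)$, plus that for $(\chi',\xi')$, plus $\langle(S_{\chi',\chi}^{\mathcal{J}}+S_{\chi,\chi'}^{\mathcal{J}}+S_{\xi',\xi}^{\mathcal{J}^{C}}+S_{\xi,\xi'}^{\mathcal{J}^{C}})f,f\rangle$. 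The first two summands are each bounded below by $A\Vert L^{*}f\Vert^{2}$, and the hypothesis annihilates the cross-operator contribution from below, yielding the lower weaving bound $2A\Vert L^{*}f\Vert^{2}$. For the upper bound I would invoke the pointwise inequality $\Vert u+v\Vert^{2}\leq 2(\Vert u\Vert^{2}+\Vert v\Vert^{2})$ together with the upper weaving (Bessel) bounds, producing an overall bound of $4B\Vert f\Vert^{2}$.

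The only delicate point, in my view, is the bookkeeping of the operator labels: one must verify that the four cross-integrals assemble into exactly the self-adjoint expression assumed non-negative in the statement, because the order of subscripts in $S_{\chi',\chi}$ versus $S_{\chi,\chi'}$ is forced by the orientation of the adjoint in $\int\langle\chi_{\varsigma}f,\chi'_{\varsigma}f\rangle\,d\mu=\langle\int(\chi'_{\varsigma})^{*}\chi_{\varsigma}f\,d\mu,f\rangle$. Once this matching is pinned down, the rest reduces to a routine application of the pre-existing weaving inequalities and the Cauchy--Schwarz-type expansions above.
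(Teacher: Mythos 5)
Your proposal is correct and follows essentially the same route as the paper: part (1) is the identical norm-splitting computation for the direct sum, and part (2) is the same expansion of the square whose cross-terms assemble into the operator assumed non-negative, which you express via the quadratic form $\langle S f,f\rangle$ while the paper writes the equivalent operator inequality $S_{\chi+\chi'}^{\mathcal{J}}+S_{\xi+\xi'}^{\mathcal{J}^{C}}\geq (A+A')LL^{\ast}$. The only substantive additions on your side are the explicit upper (Bessel) bound via $\Vert u+v\Vert^{2}\leq 2(\Vert u\Vert^{2}+\Vert v\Vert^{2})$, which the paper merely asserts, and the explicit identification of the reference operator $L\oplus L'$ in part (1), which the paper leaves implicit.
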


\begin{proof}$ (1) $ Let $ (f,g) $ be an arbitrary element of $ \mathcal{H}\oplus \mathcal{H'} $ and $ \mathcal{J}\subset \mathfrak{A} $. Then
\begin{align*}
&\int_{\mathcal{J}} \Vert ( \chi_{\varsigma}\oplus \chi'_{\varsigma})(f,g)\Vert^{2}d\mu(\varsigma)+\int_{\mathcal{J}^{C}} \Vert ( \xi_{\varsigma}\oplus \xi'_{\varsigma})(f,g)\Vert^{2}d\mu(\varsigma)\\
&= \int_{\mathcal{J}}\Vert (\chi_{\varsigma}f,\chi'_{\varsigma}g)\Vert^{2}d\mu(\varsigma)+\int_{\mathcal{J}^{C}}\Vert (\xi_{\varsigma}f,\xi'_{\varsigma}g)\Vert^{2}d\mu(\varsigma)\\
&=\int_{\mathcal{J}}\langle \chi_{\varsigma}f,\chi'_{\varsigma}g), (\chi_{\varsigma}f,\chi'_{\varsigma}g)\rangle d\mu(\varsigma) +\int_{\mathcal{J}^{C}}\langle \xi_{\varsigma}f,\xi'_{\varsigma}g), (\xi_{\varsigma}f,\xi'_{\varsigma}g)\rangle d\mu(\varsigma)\\
&=\int_{\mathcal{J}}(\Vert \chi_{\varsigma}f\Vert ^{2}+\Vert \chi'_{\varsigma}g\Vert^{2})d\mu(\varsigma) +\int_{\mathcal{J}^{C}}(\Vert \chi_{\varsigma}f\Vert ^{2}+\Vert \chi'_{\varsigma}g\Vert^{2})d\mu(\varsigma)\\
&\leq B\Vert f\Vert^{2} +B'\Vert g\Vert^{2} \\
&\leq max\{ B,\;B'\} \Vert(f,g)\Vert^{2}.
\end{align*}
	
Similarly,  for the lower bound, we can have the result. 

$ (2) $	For every $\mathcal{J}\subset \mathfrak{A} $, we have 
\begin{align*}
S_{\chi'+\chi}^{J}+S_{\xi+\xi'}^{\mathcal{J}^{C}}&= \int_{\mathcal{J}} (\chi_{\varsigma}+\chi_{\varsigma}')^{\ast}(\chi_{\varsigma}+\chi_{\varsigma}')d\mu(\varsigma)+\int_{\mathcal{J}^{C}} (\xi_{\varsigma}+\xi_{\varsigma}')^{\ast}(\xi_{\varsigma}+\xi_{\varsigma}')d\mu(\varsigma)\\
&=\int_{\mathcal{J}} (\chi_{\varsigma})^{\ast}(\chi_{\varsigma})d\mu(\varsigma)+\int_{\mathcal{J}} (\chi'_{\varsigma})^{\ast}(\chi'_{\varsigma})d\mu(\varsigma)+\int_{\mathcal{\mathcal{J}}^{C}} (\xi_{\varsigma})^{\ast}\xi_{\varsigma}d\mu(\varsigma)+\int_{\mathcal{J}^{C}} (\xi_{\varsigma})^{\ast}\xi'_{\varsigma}d\mu(\varsigma)\\
&+\int_{\mathcal{J}} (\chi_{\varsigma})^{\ast}(\chi'_{\varsigma})+ (\chi'_{\varsigma})^{\ast}(\chi_{\varsigma})d\mu(\varsigma)+\int_{\mathcal{J}^{C}} (\xi_{\varsigma})^{\ast}(\xi'_{\varsigma})+ (\xi'_{\varsigma})^{\ast}(\xi_{\varsigma})d\mu(\varsigma)\\
&= S_{\chi}^{\mathcal{J}}+S_{\xi}^{\mathcal{J}^{C}}+S_{\chi'}^{\mathcal{J}}+S_{\xi'}^{\mathcal{J}^{C}}+S_{\chi,\chi'}^{\mathcal{J}}+S_{\chi',\chi}^{\mathcal{J}}+ S_{\xi,\xi'}^{\mathcal{J}^{C}}+S_{\xi',\xi}^{\mathcal{J}^{C}}\\
&\geq ALL^{\ast}+A'LL^{\ast}\\
&=(A+A')LL^{\ast}.
\end{align*}
Also, $ \{\chi_{\varsigma}+\chi'_{\varsigma}\}_{\varsigma\in \mathcal{J}}\cup\{\xi_{\varsigma}+\xi'_{\varsigma}\}_{\varsigma\in \mathcal{J}^{C}} $ is a $ g- $Bessel sequence.
\end{proof}

\begin{definition} Let $ \chi=\{\chi_{\varsigma}\}_{\varsigma\in\mathfrak{A}} $ and $ \xi=\{\xi_{\varsigma}\}_{\varsigma\in\mathfrak{A}} $ be $ g- $Bessel sequences. 
	\begin{itemize}
		\item [(1)] $ \xi $ is  called an  $ L- $dual of $ \chi $ if for each $ f\in\mathcal{H} $, we have $ Lf= S_{\chi,\xi}(f)= \int_{\mathfrak{A}} \xi_{\varsigma}^{\ast}\chi_{\varsigma}(f)d\mu(\varsigma)$.
		\item[(2)] $ \xi $ is called an approximate $ L- $dual of $ \chi $ if there exists $ 0<t<1 $ such that for every $ f\in\mathcal{H} $
			\begin{equation*}
			\Vert L(f)-S_{\chi,\xi}(f)\Vert \leq t\Vert L(f)\Vert.
		\end{equation*}  
	\end{itemize}
	\end{definition}

\begin{proposition} Let $ \xi=\{\xi_{\varsigma}\}_{\varsigma\in\mathfrak{A}} $ be an approximate $ L- $dual of $ \chi $.
Then $ \chi $ has an $ L- $dual and every element $ L(f) $ of $ R(L) $ can be reconstructed from $\{ \xi_{\varsigma}^{\ast}\circ \chi_{\varsigma}(f) \}_{\varsigma\in\mathfrak{A}}$. 
	\end{proposition}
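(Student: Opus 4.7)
The plan is to upgrade the approximate $L$-dual $\xi$ to an exact $L$-dual by a Neumann-series correction. Set $A := S_{\chi,\xi}$, so the hypothesis becomes $\|Lf - Af\| \le t\|Lf\|$ for every $f \in \mathcal{H}$, with $0 < t < 1$. The first observation is that this inequality forces $\ker L \subseteq \ker(L - A)$: indeed, if $Lf = 0$, then $\|(L-A)f\| \le 0$, so $(L - A)f = 0$ and in particular $Af = 0$.

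The crucial step is to factor $L - A$ through $L$. Setting $V(Lf) := (L-A)f$ gives a well-defined linear map $V: R(L) \to \mathcal{H}$ (by the kernel inclusion), and the hypothesis yields $\|V(Lf)\| \le t\|Lf\|$, so $\|V\| \le t$. Since $L$ has closed range, extending $V$ by zero on $R(L)^{\perp}$ produces a bounded operator $\widetilde{V}: \mathcal{H} \to \mathcal{H}$ with $\|\widetilde{V}\| \le t < 1$ and $\widetilde{V}L = L - A$, i.e.\ $A = (I - \widetilde{V})L$. As $\|\widetilde{V}\| < 1$, the operator $I - \widetilde{V}$ is invertible on $\mathcal{H}$ with $(I - \widetilde{V})^{-1} = \sum_{k=0}^{\infty} \widetilde{V}^{k}$, and hence $L = (I - \widetilde{V})^{-1} A$.

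Next I would define $\eta_{\varsigma} := \xi_{\varsigma}(I - \widetilde{V}^{*})^{-1} \in B(\mathcal{H}, \mathcal{H}_{\varsigma})$, so that $\eta_{\varsigma}^{*} = (I - \widetilde{V})^{-1}\xi_{\varsigma}^{*}$. Since $(I - \widetilde{V}^{*})^{-1}$ is a fixed bounded operator, a short computation shows $\eta$ is again a $g$-Bessel sequence (with bound at most $B/(1-t)^{2}$, where $B$ is a Bessel bound for $\xi$). Pulling $(I - \widetilde{V})^{-1}$ outside the vector-valued integral, which is legitimate because it is bounded and linear, yields
\begin{equation*}
\int_{\mathfrak{A}} \eta_{\varsigma}^{*}\chi_{\varsigma}(f)\,d\mu(\varsigma) = (I - \widetilde{V})^{-1}\!\!\int_{\mathfrak{A}} \xi_{\varsigma}^{*}\chi_{\varsigma}(f)\,d\mu(\varsigma) = (I - \widetilde{V})^{-1}Af = Lf,
\end{equation*}
so $\eta$ is an $L$-dual of $\chi$. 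Expanding the Neumann series gives the explicit reconstruction
\begin{equation*}
Lf = \sum_{k=0}^{\infty} \widetilde{V}^{k}\!\!\int_{\mathfrak{A}} \xi_{\varsigma}^{*}\chi_{\varsigma}(f)\,d\mu(\varsigma),
\end{equation*}
which recovers every $Lf \in R(L)$ from the family $\{\xi_{\varsigma}^{*}\chi_{\varsigma}(f)\}_{\varsigma \in \mathfrak{A}}$.

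The main technical point is the factoring step: one must verify that $V(Lf) := (L-A)f$ is well-defined and bounded purely from the approximate dual inequality together with the kernel inclusion, and that it extends to all of $\mathcal{H}$ as a bounded operator — here the standing hypothesis that $L$ has closed range is what makes the zero extension on $R(L)^{\perp}$ work cleanly. Once $\widetilde{V}$ with $\|\widetilde{V}\| < 1$ is in hand, the remaining construction is routine Neumann-series bookkeeping together with the standard interchange of bounded operators with the Bochner integral defining $S_{\chi,\xi}$.
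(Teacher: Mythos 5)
Your argument is correct, and it reaches the same conclusion as the paper by a recognizably different mechanism. The paper works with the map $V\colon R(L)\to R(S_{\xi,\chi})$, $V(Lf)=S_{\xi,\chi}(f)$, derives the two-sided estimate $(1-t)\Vert Lf\Vert \leq \Vert V(Lf)\Vert \leq (1+t)\Vert Lf\Vert$ from the approximate-duality inequality, concludes that $V$ is injective with closed range, and then corrects $\xi$ by $U^{\ast}$ where $U=V^{-1}\circ \pi_{R(V)}$, obtaining the $L$-dual $\{\xi_{\varsigma}\circ U^{\ast}\}_{\varsigma\in\mathfrak{A}}$. You instead isolate the \emph{error} operator: you factor $S_{\chi,\xi}=(I-\widetilde{V})L$ with $\Vert\widetilde{V}\Vert\leq t<1$ and invert $I-\widetilde{V}$ by a Neumann series, which yields the dual $\{\xi_{\varsigma}(I-\widetilde{V}^{\ast})^{-1}\}_{\varsigma\in\mathfrak{A}}$. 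The two correcting operators coincide on $R(S_{\chi,\xi})$ and differ only off that range, so both produce valid $L$-duals. What your route buys is the explicit iterative reconstruction $Lf=\sum_{k\geq 0}\widetilde{V}^{k}S_{\chi,\xi}f$ (a frame-algorithm-type formula) and it sidesteps the extension of $V^{-1}$ from $R(V)$ by an orthogonal projection, a step the paper states slightly carelessly (it writes $\pi_{R(L)}$ while describing the projection onto $R(V)$). The paper's route is marginally shorter since it never names the error operator. Both arguments use the standing hypothesis that $L$ has closed range at exactly one point: to make the extension of the partially defined operator to all of $\mathcal{H}$ clean.
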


\begin{proof} Since $ \xi $ is an approximate $ L- $dual of $ \chi $, there exists $ 0<t<1 $ such that 
		\begin{equation}\label{1}
		\Vert L(f)-S_{\chi,\xi}(f)\Vert \leq t\Vert L(f)\Vert, \;\; f\in\mathcal{H}.
	\end{equation}
Then $ S_{\chi,\xi}(f)=0 \Leftrightarrow L(f)=0 $. Therefore, we get
$$ V:R(L)\Rightarrow R(S_{\xi,\chi}), $$ 
which is defined by
 $$  V(L(f))= S_{\xi,\chi}(f) ,\;\;\forall f\in\mathcal{H}.
 $$
  Hence $ V $ is an injective bounded linear map. By (\ref{1}),  we obtain
 \begin{equation}
 		\Vert L(f)-V(L(f))\Vert \leq t\Vert L(f)\Vert, \;\; f\in\mathcal{H}
 \end{equation}
and  so, for every $ f\in \mathcal{H} $,
 \begin{equation*}
 (1-t)	\Vert Lf\Vert \leq\Vert V(L(f))\Vert \leq (1+t)\Vert Lf\Vert.
 \end{equation*}
Hence $ V $ has a closed range, $ R(V)= R(S_{\xi,\chi}) $ and we can extend
 $ V^{-1} : R(S_{\xi,\chi})\rightarrow R(L)$ to $ U: \mathcal{H}\rightarrow \mathcal{H} $ by $ U= V^{-1}\circ \pi_{R(L)}  $, where $ \pi_{R(L)} $ is the orthogonal projection of $ \mathcal{H} $ onto $ R(V) $. Then
 \begin{equation*}
 L(f)= U\circ S_{\xi,\chi}(f)=\int_{\mathfrak{A}}(U \circ \xi_{\varsigma}^{\ast})\circ \chi_{\varsigma}(f)d\mu(\varsigma), \;\;f\in\mathcal{H}.
 \end{equation*} 
Thus  $ \{\xi_{\varsigma}\circ U^{\ast}\}_{\varsigma\in\mathfrak{A}} $ is an $ L- $dual of $\{ \chi_{\varsigma}\}_{\varsigma\in\mathfrak{A}} $. 
\end{proof}

Now, we study the continuous $ L-g-  $frames under some perturbations and we try to show that approximate $ L- $duals are stable under small perturbation in a continuous case.

\begin{theorem} Let $ \chi=\{\chi_{\varsigma}\in B(\mathcal{H},\mathcal{H_{\varsigma}})\}_{\varsigma\in\mathfrak{A}} $ be a continuous $ g- $Bessel sequence and $ \Phi= \{\phi \in B(\mathcal{H},\mathcal{H_{\varsigma}})\}_{\varsigma\in\mathfrak{A}} $ be an approximate $ L- $dual (resp. $ L- $dual) of $ \chi $ with $ 0<t<1 $ and upper bound $ D $. If $ \xi=\{\xi_{\varsigma}\}\in B(\mathcal{H},\mathcal{H_{\varsigma}})\}_{\varsigma\in\mathfrak{A}} $  is a sequence such that 
	\begin{equation*}
		(\int_{\mathfrak{A}} \Vert (\chi_{\varsigma}-\xi_{\varsigma})(f)\Vert^{2}d\mu(\varsigma))^{\frac{1}{2}}\leq  G \Vert Lf\Vert,\;\;f\in\mathcal{H},
	\end{equation*}
	and $ \sqrt{DG}< 1-t $ (resp. $ DG<1 $), then $ \Phi $ is an approximate $ L- $dual of $ \xi $.
	\end{theorem}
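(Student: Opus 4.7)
The plan is to bound $\|L(f) - S_{\xi,\Phi}(f)\|$ by adding and subtracting $S_{\chi,\Phi}(f)$ and then applying the triangle inequality. Writing
\[
L(f) - S_{\xi,\Phi}(f) \;=\; \bigl(L(f) - S_{\chi,\Phi}(f)\bigr) \;+\; \bigl(S_{\chi,\Phi}(f) - S_{\xi,\Phi}(f)\bigr),
\]
the first summand is already controlled: by the assumption that $\Phi$ is an approximate $L$-dual of $\chi$, its norm is at most $t\,\|L(f)\|$, and it vanishes outright in the exact $L$-dual case. So only the cross term remains to be estimated.

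The main work lies in estimating the cross term, which equals $\int_{\mathfrak{A}} \phi_\varsigma^{\ast}(\chi_\varsigma - \xi_\varsigma)(f)\,d\mu(\varsigma)$. I would pass to a supremum over unit $g \in \mathcal{H}$, move $\phi_\varsigma^{\ast}$ across the inner product, and apply the Cauchy-Schwarz inequality on $\mathfrak{A}$:
\[
\|S_{\chi,\Phi}(f) - S_{\xi,\Phi}(f)\| \;\leq\; \sup_{\|g\|=1}\Bigl(\int_{\mathfrak{A}}\|(\chi_\varsigma - \xi_\varsigma)(f)\|^{2}\,d\mu(\varsigma)\Bigr)^{1/2}\Bigl(\int_{\mathfrak{A}}\|\phi_\varsigma(g)\|^{2}\,d\mu(\varsigma)\Bigr)^{1/2}.
\]
The first factor is at most $G\,\|L(f)\|$ by the perturbation hypothesis, and the second is at most $\sqrt{D}$ by the upper Bessel bound of $\Phi$. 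Hence $\|S_{\chi,\Phi}(f) - S_{\xi,\Phi}(f)\| \leq \sqrt{D}\,G\,\|L(f)\|$.

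Combining the two estimates yields
\[
\|L(f) - S_{\xi,\Phi}(f)\| \;\leq\; \bigl(t + \sqrt{D}\,G\bigr)\,\|L(f)\|,
\]
and the standing hypothesis on $D$, $G$, and $t$ ensures that this constant is strictly less than $1$, so $\Phi$ is indeed an approximate $L$-dual of $\xi$ with constant $t+\sqrt{D}\,G$; in the exact $L$-dual case one has $t=0$ and only the perturbation term survives. I do not foresee a genuine obstacle here; the one point requiring care is to keep the supremum over $g$ \emph{outside} the integrals when applying Cauchy-Schwarz, so that the Bessel bound of $\Phi$ can be invoked as a global bound rather than being applied pointwise to each $\phi_\varsigma(g)$.
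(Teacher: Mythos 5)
Your proof is correct and follows essentially the same route as the paper: add and subtract the mixed operator, bound the cross term via Cauchy--Schwarz and the Bessel bound of $\Phi$ to get $\sqrt{D}\,G\,\Vert Lf\Vert$ (which is indeed the constant the argument produces, the paper's ``$\sqrt{DG}$'' notwithstanding), and conclude with the triangle inequality. The only step the paper includes that you omit is the one-line verification that $\xi$ is itself a continuous $g$-Bessel sequence, namely $\bigl(\int_{\mathfrak{A}}\Vert \xi_{\varsigma}f\Vert^{2}d\mu(\varsigma)\bigr)^{1/2}\le\bigl(\int_{\mathfrak{A}}\Vert \chi_{\varsigma}f\Vert^{2}d\mu(\varsigma)\bigr)^{1/2}+G\Vert Lf\Vert$, which is needed for the mixed operator involving $\xi$ to be defined and for the conclusion that $\Phi$ is an approximate $L$-dual of $\xi$ to make sense.
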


\begin{proof}Let $ B $ be an upper bound for $ \chi $. Then we have 
	\begin{equation*}
	(	\int_{\mathfrak{A}}\Vert \xi_{\varsigma} f\Vert^{2}d\mu(\varsigma))^{\frac{1}{2}}\leq (\int_{\mathfrak{A}}\Vert\chi_{\varsigma} f\Vert^{2} d\mu(\varsigma))^{\frac{1}{2}}+ (\int_{\mathfrak{A}}\Vert \xi_{\varsigma}f-\chi_{\varsigma}f\Vert^{2} d\mu(\varsigma))^{\frac{1}{2}}\leq (\sqrt{B}+\sqrt{G}\Vert L\Vert)\Vert f\Vert.
	\end{equation*} 
	This implies that $ \xi $ is a $ g- $Bessel sequence. 
	
	We have 
	\begin{equation*}
		\Vert S_{\Phi,\chi}f-S_{\Phi,\xi}f\Vert \leq sup_{\Vert g\Vert=1}\{ 	(\int_{\mathfrak{A}} \Vert (\chi_{\varsigma}-\xi_{\varsigma})(f)\Vert^{2}d\mu(\varsigma))^{\frac{1}{2}} (\int_{\mathfrak{A}}\Vert \phi g\Vert^{2}d\mu(\varsigma))^{\frac{1}{2}}\}\leq \sqrt{DG}\Vert Lf\Vert.
	\end{equation*}
	Then 
	\begin{equation*}
		\Vert Lf-S_{\Phi, \xi}f\Vert \leq \Vert Lf-S_{\Phi, \chi}f\Vert+ \Vert S_{\Phi,\chi}f-S_{\Phi, \xi}f\Vert\leq (t+\sqrt{DG})\Vert Lf \Vert.
	\end{equation*}
Since $ t+\sqrt{DG}<1 $, we obtain the result.
\end{proof}

\begin{theorem} Let $ \chi=\{\chi_{\varsigma}\in B(\mathcal{H},\mathcal{H_{\varsigma}})\}_{\varsigma\in\mathfrak{A}} $ and $ \xi=\{\xi_{\varsigma}\in B(\mathcal{H},\mathcal{H_{\varsigma}})\}_{\varsigma\in\mathfrak{A}} $  be $ (A,\;B) $ woven continuous $ L-g- $frames and let $ K\in B(\mathcal{H}) $ and $ K_{\varsigma} ,\;K'_{\varsigma} \in B(\mathcal{H}_{\varsigma})$.  If there exist $ 0< \alpha< \beta< \infty $ such that for each $ \varsigma \in\mathfrak{A} $ and $ F_{\varsigma} \in\mathcal{H_{\varsigma}}$, $ \alpha \Vert F_{\varsigma}\Vert \leq \Vert K_{\varsigma} f\Vert $, $ \Vert K'_{\varsigma}F_{\varsigma} \Vert \leq \beta \Vert F_{\varsigma}\Vert$, then $ \{\chi'_{\varsigma}=K_{\varsigma}\chi_{\varsigma}K\}_{\varsigma\in\mathfrak{A}} $ and  $ \{\xi_{\varsigma}= K'_{\varsigma}\xi'_{\varsigma}K\}_{\varsigma\in\mathfrak{A}} $ are woven continuous $ T^{\ast}L-g $ frames with universal bounds $ \alpha^{2}A $ and $ \beta^{2} B \Vert K \Vert^{2}$.
	
Moreover, if $ KL^{\ast}= L^{\ast}K $, $ \alpha \Vert F\Vert \leq \Vert KF\Vert  $, then  $ \chi'=\{\chi'_{\varsigma}\in B(\mathcal{H},\mathcal{H_{\varsigma}})\}_{\varsigma\in\mathfrak{A}} $ and $ \xi'=\{\xi'_{\varsigma}\in B(\mathcal{H},\mathcal{H_{\varsigma}})\}_{\varsigma\in\mathfrak{A}} $  are woven continuous $ L-g- $ frames with universal bounds $ \alpha^{4}A $ and $ \beta^{2}B \Vert K\Vert ^{2}$.
	\end{theorem}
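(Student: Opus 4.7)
The plan is to reduce the claim to the hypothesis that $\chi,\xi$ are $(A,B)$-woven continuous $L$-$g$-frames by applying that inequality to the vector $Kf$ and then squeezing pointwise in $\varsigma$ using the bounds on $K_\varsigma,K_\varsigma'$. (I read the statement as containing minor typos: $\xi'_\varsigma=K'_\varsigma\xi_\varsigma K$, the perturbed family is $K^{\ast}L$-$g$, and both $K_\varsigma$ and $K'_\varsigma$ are bounded below by $\alpha$ and above by $\beta$; otherwise one could not bound the $\xi'$ integral from below nor the $\chi'$ integral from above.)

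First I would fix $f\in\mathcal{H}$ and $\mathcal{J}\subset\mathfrak{A}$ and write
\begin{equation*}
\int_{\mathcal{J}}\|\chi'_\varsigma f\|^{2}d\mu(\varsigma)+\int_{\mathcal{J}^{C}}\|\xi'_\varsigma f\|^{2}d\mu(\varsigma)
=\int_{\mathcal{J}}\|K_\varsigma\chi_\varsigma Kf\|^{2}d\mu(\varsigma)+\int_{\mathcal{J}^{C}}\|K'_\varsigma\xi_\varsigma Kf\|^{2}d\mu(\varsigma).
\end{equation*}
For the lower bound, use $\|K_\varsigma F\|\geq\alpha\|F\|$ and $\|K'_\varsigma F\|\geq\alpha\|F\|$ pointwise in $\varsigma$, pull the factor $\alpha^{2}$ outside, and then apply the $(A,B)$-woven $L$-$g$-frame inequality to $Kf$ to get
\begin{equation*}
\int_{\mathcal{J}}\|\chi'_\varsigma f\|^{2}d\mu(\varsigma)+\int_{\mathcal{J}^{C}}\|\xi'_\varsigma f\|^{2}d\mu(\varsigma)\geq\alpha^{2}A\|L^{\ast}Kf\|^{2}=\alpha^{2}A\|(K^{\ast}L)^{\ast}f\|^{2}.
\end{equation*}
Symmetrically, using $\|K_\varsigma F\|\leq\beta\|F\|$ and $\|K'_\varsigma F\|\leq\beta\|F\|$ together with the upper woven bound for $Kf$ and then $\|Kf\|\leq\|K\|\,\|f\|$ gives $\beta^{2}B\|K\|^{2}\|f\|^{2}$ on top. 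This proves the first assertion with universal bounds $\alpha^{2}A$ and $\beta^{2}B\|K\|^{2}$ and with the target operator $K^{\ast}L$.

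For the Moreover part, I would reuse the lower estimate just obtained but push one more factor through the inequality using $KL^{\ast}=L^{\ast}K$ and the hypothesis $\|KF\|\geq\alpha\|F\|$ on $\mathcal{H}$: namely
\begin{equation*}
\|L^{\ast}Kf\|=\|KL^{\ast}f\|\geq\alpha\|L^{\ast}f\|,
\end{equation*}
so squaring and chaining with $\alpha^{2}A\|L^{\ast}Kf\|^{2}$ yields the lower bound $\alpha^{4}A\|L^{\ast}f\|^{2}$. The upper bound $\beta^{2}B\|K\|^{2}\|f\|^{2}$ is unchanged since it did not use anything about $L$. Combined with the routine verification that the resulting families are $g$-Bessel (already implicit in the upper bound), this produces woven continuous $L$-$g$-frames with the stated universal bounds.

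I expect no serious analytic obstruction; the calculation is just a two-sided sandwich, and the only real care needed is in bookkeeping $K$ versus $K^{\ast}$ (the target operator $L$ gets replaced by $K^{\ast}L$ because the frame inequality produces $\|L^{\ast}Kf\|=\|(K^{\ast}L)^{\ast}f\|$) and in recognizing that the commuting hypothesis $KL^{\ast}=L^{\ast}K$ is exactly what lets one trade the $K$ on the right of $L^{\ast}$ for a uniform lower bound $\alpha\|L^{\ast}f\|$, yielding the extra factor $\alpha^{2}$ that upgrades $\alpha^{2}A$ to $\alpha^{4}A$.
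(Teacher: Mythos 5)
Your proposal is correct and takes essentially the same route as the paper's proof: both sandwich the integrands pointwise using the bounds $\alpha,\beta$ on $K_\varsigma, K'_\varsigma$, apply the $(A,B)$-woven inequality to the vector $Kf$, and identify $\Vert L^{\ast}Kf\Vert$ with $\Vert (K^{\ast}L)^{\ast}f\Vert$. Your reading of the statement's typos matches what the paper's computation actually uses, and your explicit treatment of the \emph{Moreover} part (trading $\Vert L^{\ast}Kf\Vert=\Vert KL^{\ast}f\Vert\geq\alpha\Vert L^{\ast}f\Vert$ for the extra $\alpha^{2}$) simply fills in a step the paper leaves implicit.
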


\begin{proof}For every $ J\subset \mathfrak{A} $, we have
	\begin{align*}
		\int_{J}\Vert\chi'_{\varsigma}F\Vert^{2}d\mu(\varsigma)+\int_{J^{C}}\Vert\xi'_{\varsigma}F\Vert^{2}d\mu(\varsigma)&=	\int_{J}\Vert K_{\varsigma}\chi_{\varsigma}KF\Vert^{2}d\mu(\varsigma)+\int_{J^{C}}\Vert K' \xi'_{\varsigma}KF\Vert^{2}d\mu(\varsigma)\\
		&\leq 	\int_{J}\Vert K_{\varsigma}\Vert^{2} \Vert\chi_{\varsigma}KF\Vert^{2}d\mu(\varsigma)+\int_{J^{C}}\Vert K'\Vert^{2}\Vert  \xi'_{\varsigma}KF\Vert^{2}d\mu(\varsigma)\\
		&\leq \beta^{2}(\int_{J} \Vert\chi_{\varsigma}KF\Vert^{2}d\mu(\varsigma)+\int_{J^{C}}\Vert  \xi'_{\varsigma}KF\Vert^{2}d\mu(\varsigma))\\
		&\leq \beta^{2}B\Vert K\Vert^{2} \Vert F \Vert^{2},\;\;\forall F\in\mathcal{H}.
	\end{align*}
Similarly, we obtain 
\begin{equation*}
	\int_{J}\Vert\chi'_{\varsigma}F\Vert^{2}d\mu(\varsigma)+\int_{J^{C}}\Vert\xi'_{\varsigma}F\Vert^{2}d\mu(\varsigma)\geq \alpha^{2}A\Vert (K^{\ast}L)^{\ast}F \Vert ^{2}, \;\;\;\forall F\in\mathcal{H}.
\end{equation*}
This completes the proof.
	\end{proof}

\begin{corollary}Let $ \chi=\{\chi_{\varsigma}\in B(\mathcal{H},\mathcal{H_{\varsigma}})\}_{\varsigma\in\mathfrak{A}} $ be an $ L-g- $frame for $ \mathcal{H} $ and $ K \in B(\mathcal{H})$ be invertible. Then
\begin{itemize}
	\item[(1)] $ \{\chi_{\varsigma}K\}_{\varsigma} $ is a continuous $ L-g- $frame, when $ \xi L^{\ast} = L^{\ast}\xi.$
	\item[(2)]$ \{K\chi_{\varsigma}\}_{\varsigma} $ is a continuous $ L-g- $frame, when $ \mathcal{H}_{\varsigma} \subset \mathcal{H}$, $\forall \varsigma \in\mathfrak{A}.  $
\end{itemize}
	\end{corollary}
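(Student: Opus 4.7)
The corollary is a direct consequence of the preceding theorem applied with operators chosen appropriately, but it is cleaner to prove it from scratch since $K$ is invertible (so the lower-bound estimates are available without invoking the $\alpha, \beta$ machinery). The plan is to establish both parts by sandwiching the relevant integral between the $L$-$g$-frame bounds of $\chi$ composed with suitable multiples of $\|K\|$ and $\|K^{-1}\|$. In both parts the upper (Bessel) bound is immediate from $\|\chi_{\varsigma}Kf\|\leq \|\chi_{\varsigma}\|\|K\|\|f\|$ or $\|K\chi_{\varsigma}f\|\leq \|K\|\|\chi_{\varsigma}f\|$; the substance is the lower bound. I take the hypothesis ``$\xi L^{\ast}=L^{\ast}\xi$'' in part (1) to be a typographical slip for ``$KL^{\ast}=L^{\ast}K$'', which is what is needed to push $L^{\ast}$ past $K$.

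For part (1), fix $f\in\mathcal{H}$ and apply the $L$-$g$-frame inequality of $\chi$ to $Kf$:
\begin{equation*}
A\|L^{\ast}Kf\|^{2}\;\leq\;\int_{\mathfrak{A}}\|\chi_{\varsigma}Kf\|^{2}d\mu(\varsigma)\;\leq\;B\|Kf\|^{2}\;\leq\;B\|K\|^{2}\|f\|^{2}.
\end{equation*}
Using $KL^{\ast}=L^{\ast}K$, I rewrite $L^{\ast}Kf=KL^{\ast}f$, and since $K$ is invertible, $\|L^{\ast}f\|=\|K^{-1}KL^{\ast}f\|\leq\|K^{-1}\|\,\|KL^{\ast}f\|$, so $\|L^{\ast}Kf\|^{2}\geq\|K^{-1}\|^{-2}\|L^{\ast}f\|^{2}$. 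This yields the lower $L$-$g$-frame bound $A\|K^{-1}\|^{-2}$ for $\{\chi_{\varsigma}K\}$.

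For part (2), the hypothesis $\mathcal{H}_{\varsigma}\subset\mathcal{H}$ is only needed so that the composition $K\chi_{\varsigma}$ makes sense as an element of $B(\mathcal{H},\mathcal{H}_{\varsigma})$ (or rather into a space containing $\mathcal{H}_{\varsigma}$; I read the statement as treating $K\chi_{\varsigma}$ as a bounded operator and applying the $g$-frame definition). I bound
\begin{equation*}
\|K^{-1}\|^{-2}\int_{\mathfrak{A}}\|\chi_{\varsigma}f\|^{2}d\mu(\varsigma)\;\leq\;\int_{\mathfrak{A}}\|K\chi_{\varsigma}f\|^{2}d\mu(\varsigma)\;\leq\;\|K\|^{2}\int_{\mathfrak{A}}\|\chi_{\varsigma}f\|^{2}d\mu(\varsigma),
\end{equation*}
where the left inequality uses $\|\chi_{\varsigma}f\|=\|K^{-1}K\chi_{\varsigma}f\|\leq\|K^{-1}\|\|K\chi_{\varsigma}f\|$. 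Plugging in the $L$-$g$-frame bounds of $\chi$ gives lower bound $A\|K^{-1}\|^{-2}$ and upper bound $B\|K\|^{2}$, finishing the proof.

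Since the statement is a corollary with $K$ invertible, no genuine obstacle arises; the only delicate point is recognizing the intended hypothesis $KL^{\ast}=L^{\ast}K$ in (1) and the role of the inclusion $\mathcal{H}_{\varsigma}\subset\mathcal{H}$ in (2) (purely to legitimize the composition). Both inequalities then reduce to one-line applications of the operator-norm bounds of $K$ and $K^{-1}$ combined with the $L$-$g$-frame inequality for $\chi$.
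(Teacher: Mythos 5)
Your proof is correct and follows essentially the same route as the paper: apply the $L$-$g$-frame inequality of $\chi$ to $Kf$ (resp.\ to $f$), use $\|x\|\leq\|K^{-1}\|\,\|Kx\|$ for the lower bound, and use the commutation $KL^{\ast}=L^{\ast}K$ (which you correctly identify as the intended reading of the misprinted hypothesis ``$\xi L^{\ast}=L^{\ast}\xi$'') to move $L^{\ast}$ past $K$ in part (1). Your write-up is in fact slightly cleaner than the paper's, whose displayed chain in part (2) is stated a bit loosely, but the bounds $A\|K^{-1}\|^{-2}$ and $B\|K\|^{2}$ and the underlying argument coincide.
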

\begin{proof}Let $ \{\chi_{\varsigma}\}_{\varsigma \in \mathfrak{A}} $ be a continuous $ L-g- $frame with bounds $ A $ and $ B $. 
	 
$ (1) $ For every $ f\in\mathcal{H} $, we have
\begin{align*}
\dfrac{A}{\Vert K^{-1}\Vert ^{2}} \Vert L^{\ast}f \Vert^{2}&\leq A\Vert KL^{\ast}f \Vert^{2}= A\Vert L^{\ast}Kf\Vert^{2}\\
&\leq \int_{\mathfrak{A}}\Vert \chi_{\varsigma} Kf \Vert^{2} d\mu(\varsigma)\\
&\leq B\Vert Kf\Vert^{2}\\
& \leq B\Vert K\Vert ^{2} \Vert f\Vert^{2}
\end{align*}
 $ (2) $ We have
\begin{align*}
	\dfrac{A}{\Vert K^{-1}\Vert ^{2}} \Vert L^{\ast}f \Vert^{2}&\leq \dfrac{1}{\Vert K^{-1}\Vert^{2}}\int_{\mathfrak{A}}\Vert K\chi_{\varsigma} f \Vert^{2} d\mu(\varsigma)\\
	&\leq\Vert K\Vert^{2} \int_{\mathfrak{A}}\Vert \chi_{\varsigma} f \Vert^{2} d\mu(\varsigma)\\
	& \leq B\Vert K\Vert ^{2} \Vert f\Vert^{2},\;\;\forall f\in \mathcal{H}.
\end{align*}
This completes the proof.
\end{proof}

	\begin{proposition} Let $ \{\chi_{\varsigma}\}_{\varsigma \in\mathfrak{A}} $ and $ \{\xi_{\varsigma}\}_{\varsigma \in\mathfrak{A}} $ be $ (A,B) $ woven continuous $ L-g- $frames. 
		
	If 
		$$ \int_{J}\Vert \chi_{\varsigma}f\Vert^{2}d\mu(\varsigma)\leq C \Vert L^{\ast} f\Vert ^{2}, \;\;\forall f\in\mathcal{H}$$ 
	for some $ 0<C<A $, then  $ \{\chi_{\varsigma}\}_{\varsigma \in \mathcal{J}^{C}} $ and $ \{\xi_{\varsigma}\}_{\varsigma \in \mathcal{J}^{C}}$  are $ (A-C,B) $ woven $ L-g- $frames.
	\end{proposition}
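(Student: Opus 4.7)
The plan is to reduce the new restricted weaving inequality on the index set $\mathcal{J}^C$ directly to the original $(A,B)$ woven inequality on $\mathfrak{A}$, using the hypothesis to absorb the $\chi$-contribution coming from $\mathcal{J}$. The crucial observation is that, given any measurable subset $\mathcal{K}\subset\mathcal{J}^C$, the set $\mathcal{K}$ is disjoint from $\mathcal{J}$, so $\mathcal{K}\cup\mathcal{J}$ has complement $\mathcal{J}^C\setminus\mathcal{K}$ in $\mathfrak{A}$ and is therefore a valid weaving partition for the original $(A,B)$ woven $L$-$g$-frames.

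Applying the original woven lower bound with weaving set $\mathcal{K}\cup\mathcal{J}$ gives $A\|L^*f\|^2\leq\int_{\mathcal{K}\cup\mathcal{J}}\|\chi_\varsigma f\|^2\,d\mu(\varsigma)+\int_{\mathcal{J}^C\setminus\mathcal{K}}\|\xi_\varsigma f\|^2\,d\mu(\varsigma)$. I would then split the first integral using the disjointness of $\mathcal{K}$ and $\mathcal{J}$, and invoke the hypothesis $\int_{\mathcal{J}}\|\chi_\varsigma f\|^2\,d\mu\leq C\|L^*f\|^2$ to dominate the $\mathcal{J}$-piece. Transferring the resulting $C\|L^*f\|^2$ term to the left-hand side yields $(A-C)\|L^*f\|^2\leq\int_{\mathcal{K}}\|\chi_\varsigma f\|^2\,d\mu(\varsigma)+\int_{\mathcal{J}^C\setminus\mathcal{K}}\|\xi_\varsigma f\|^2\,d\mu(\varsigma)$, which is the desired lower bound for the weaving on $\mathcal{J}^C$; the condition $C<A$ is exactly what keeps this new frame constant strictly positive.

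For the upper bound, I would observe that since $\mathcal{K}\subset\mathcal{J}^C$, one has $\mathcal{J}^C\setminus\mathcal{K}\subset\mathcal{K}^C$ (complement taken in $\mathfrak{A}$), so the partial sum is dominated by $\int_{\mathcal{K}}\|\chi_\varsigma f\|^2\,d\mu+\int_{\mathcal{K}^C}\|\xi_\varsigma f\|^2\,d\mu$, and the original $(A,B)$ woven upper bound finishes it by $B\|f\|^2$. The whole argument is short; the only genuine idea is picking the right weaving partition of $\mathfrak{A}$, namely $\mathcal{K}\cup\mathcal{J}$ rather than $\mathcal{K}$ itself, so that the hypothesis is directly applicable. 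I do not anticipate a real obstacle beyond this small set-theoretic bookkeeping.
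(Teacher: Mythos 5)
Your proposal is correct and takes essentially the same route as the paper: the paper likewise fixes an arbitrary $E\subset\mathfrak{A}\setminus\mathcal{J}$, applies the original $(A,B)$ woven lower bound to the enlarged weaving set $E\cup\mathcal{J}$, and subtracts the $\mathcal{J}$-contribution via the hypothesis to obtain the constant $A-C$. You are in fact slightly more complete, since you also verify the upper bound $B$ and correctly state the lower inequality against $\Vert L^{\ast}f\Vert^{2}$, whereas the paper omits the former and writes $\Vert f\Vert^{2}$ in the latter.
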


\begin{proof}Suppose that $ E\subset \mathfrak{A} \backslash \mathcal{J} $.
	Then  for all $f\in\mathcal{H} $, 
	\begin{align*}
	&	\int_{E}\vert \langle f,\chi_{\varsigma}\rangle \vert^{2}d\mu(\varsigma)+ \int_{E^{C}}\vert \langle f,\xi_{\varsigma}\rangle \vert^{2}d\mu(\varsigma)\\
	&=(\int_{E\cup \mathcal{J}}\vert \langle f,\chi_{\varsigma}\rangle \vert^{2}d\mu(\varsigma)-\int_{ \mathcal{J}}\vert \langle f,\chi_{\varsigma}\rangle \vert^{2}d\mu(\varsigma))+\int_{E^{C}}\vert \langle f,\xi_{\varsigma}\rangle \vert^{2}d\mu(\varsigma)\\
	&\geq (A-C)\Vert f\Vert^{2},
	\end{align*}  
and so a lower weaving bound is $ A-C $. 	
\end{proof}

\begin{corollary} Let  $ \{\chi_{\varsigma}\}_{\varsigma \in\mathfrak{A}} $ be a continuous $ L-g- $frame with lower frame bound $ A $. If for some $ \mathcal{J}\subset \mathfrak{A} $ and $ 0<C<A $, 
	$$ \int_{\mathcal{J}}\Vert \chi_{\varsigma}f\Vert^{2}d\mu(\varsigma)\leq C \Vert L^{\ast} f\Vert ^{2}, \;\;\forall f\in\mathcal{H},$$ 
	then $ \{\chi_{\varsigma}\}_{\varsigma \in \mathcal{J}^{C}} $ is a continuous $ L-g- $frame with lower bound $ A-C $. 
\end{corollary}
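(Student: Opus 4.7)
The plan is to derive the lower frame inequality for the subfamily $\{\chi_\varsigma\}_{\varsigma\in\mathcal{J}^C}$ by additivity of the integral over the decomposition $\mathfrak{A}=\mathcal{J}\sqcup\mathcal{J}^C$, and to obtain the upper bound by monotonicity. Fix $f\in\mathcal{H}$. Since $\{\chi_\varsigma\}_{\varsigma\in\mathfrak{A}}$ is a continuous $L$-$g$-frame with lower bound $A$ and some upper bound $B$, I have
\begin{equation*}
A\Vert L^{\ast}f\Vert^{2}\leq \int_{\mathfrak{A}}\Vert\chi_\varsigma f\Vert^{2}\,d\mu(\varsigma)=\int_{\mathcal{J}}\Vert\chi_\varsigma f\Vert^{2}\,d\mu(\varsigma)+\int_{\mathcal{J}^{C}}\Vert\chi_\varsigma f\Vert^{2}\,d\mu(\varsigma).
\end{equation*}

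Next I would apply the hypothesis $\int_{\mathcal{J}}\Vert\chi_\varsigma f\Vert^{2}\,d\mu(\varsigma)\leq C\Vert L^{\ast}f\Vert^{2}$ to subtract this quantity from both sides, yielding
\begin{equation*}
\int_{\mathcal{J}^{C}}\Vert\chi_\varsigma f\Vert^{2}\,d\mu(\varsigma)\geq A\Vert L^{\ast}f\Vert^{2}-\int_{\mathcal{J}}\Vert\chi_\varsigma f\Vert^{2}\,d\mu(\varsigma)\geq (A-C)\Vert L^{\ast}f\Vert^{2},
\end{equation*}
which is the desired lower inequality with bound $A-C>0$. The upper bound follows immediately since $\int_{\mathcal{J}^{C}}\Vert\chi_\varsigma f\Vert^{2}\,d\mu(\varsigma)\leq \int_{\mathfrak{A}}\Vert\chi_\varsigma f\Vert^{2}\,d\mu(\varsigma)\leq B\Vert f\Vert^{2}$, so $B$ serves as an upper bound for the restricted family.

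There is no substantive obstacle here: the result is essentially a specialization of the preceding Proposition (take $\xi_\varsigma=\chi_\varsigma$, or equivalently ignore the $\xi$-part by observing that the argument only uses the $\chi$-contribution). The only point to check is that $A-C>0$, which is built into the hypothesis $0<C<A$. Thus the restriction $\{\chi_\varsigma\}_{\varsigma\in\mathcal{J}^{C}}$ is a continuous $L$-$g$-frame with lower bound $A-C$ and upper bound $B$.
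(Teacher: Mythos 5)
Your argument is correct and matches the paper's reasoning: the corollary is stated without proof as the special case $\xi_\varsigma=\chi_\varsigma$ of the preceding proposition, whose proof uses exactly your add-and-subtract decomposition over $\mathfrak{A}=\mathcal{J}\cup\mathcal{J}^{C}$. Your explicit verification of the lower bound $(A-C)\Vert L^{\ast}f\Vert^{2}$ and the trivial upper bound $B\Vert f\Vert^{2}$ is complete and needs no changes.
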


\begin{definition}Let $ \{\chi_{\varsigma}\}_{\varsigma\in \mathcal{J}} $ be a continuous $ L-g- $frame and let $ 0\leq \alpha_{1},\alpha_{2}<1 $. We say that the family $ \{\xi_{\varsigma}\}_{\varsigma \in \mathfrak{A}} $ is an $( \alpha_{1},\alpha_{2}) -$ perturbation of $ \{\chi_{\varsigma}\}_{\varsigma\in \mathfrak{A}} $ if we have 
	\begin{equation*}
		\Vert \chi_{\varsigma}f -\xi_{\varsigma} f\Vert \leq \alpha_{1} \Vert \chi_{\varsigma}f\Vert +\alpha_{2} \Vert \xi_{\varsigma}f\Vert,\;\;\;\forall f\in\mathcal{H}.
	\end{equation*}
	\end{definition}

\begin{theorem}\label{t3} Let $ \{\chi_{\varsigma}\}_{\varsigma \in\mathfrak{A}} $ and $ \{\xi_{\varsigma}\}_{\varsigma \in\mathfrak{A}} $ be woven continuous  $ L-g-$frames and $ \{\chi'_{\varsigma}\}_{\varsigma \in\mathfrak{A}} $, $ \{\xi'_{\varsigma}\}_{\varsigma \in\mathfrak{A}} $ be $ (\alpha_{1},\alpha_{2}) ,\;(\beta_{1},\beta_{{2}})$-perturbations of $ \{\chi_{\varsigma}\}_{\varsigma \in\mathfrak{A}} $ and $ \{\xi_{\varsigma}\}_{\varsigma \in\mathfrak{A}} $, respectively. Then $ \{\chi'_{\varsigma}\}_{\varsigma \in\mathfrak{A}} $ and 
$ \{\xi'_{\varsigma}\}_{\varsigma \in\mathfrak{A}} $ are woven $ L-g- $frames.	
	\end{theorem}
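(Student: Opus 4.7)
The plan is to convert the pointwise perturbation inequalities into Minkowski-type inequalities on the $L^{2}$-norms over $\mathcal{J}$ and $\mathcal{J}^{C}$, then pull back to the woven bounds already available for $\{\chi_{\varsigma}\}$ and $\{\xi_{\varsigma}\}$. Write the given woven bounds as $A\Vert L^{\ast}f\Vert^{2}\le \int_{\mathcal{J}}\Vert\chi_{\varsigma}f\Vert^{2}d\mu(\varsigma)+\int_{\mathcal{J}^{C}}\Vert\xi_{\varsigma}f\Vert^{2}d\mu(\varsigma)\le B\Vert f\Vert^{2}$, valid for every measurable $\mathcal{J}\subset\mathfrak{A}$.

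First, I will prove the upper (Bessel) bound for the perturbed pair. From $\Vert \chi_{\varsigma}f-\chi'_{\varsigma}f\Vert\le \alpha_{1}\Vert\chi_{\varsigma}f\Vert+\alpha_{2}\Vert\chi'_{\varsigma}f\Vert$ and the triangle inequality $\Vert\chi'_{\varsigma}f\Vert\le \Vert\chi_{\varsigma}f\Vert+\Vert\chi_{\varsigma}f-\chi'_{\varsigma}f\Vert$, one obtains the pointwise bound $\Vert\chi'_{\varsigma}f\Vert\le \frac{1+\alpha_{1}}{1-\alpha_{2}}\Vert\chi_{\varsigma}f\Vert$, and similarly $\Vert\xi'_{\varsigma}f\Vert\le \frac{1+\beta_{1}}{1-\beta_{2}}\Vert\xi_{\varsigma}f\Vert$. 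Squaring, integrating over $\mathcal{J}$ and $\mathcal{J}^{C}$ respectively, and using the upper woven bound $B$ yields a universal upper bound
\begin{equation*}
\int_{\mathcal{J}}\Vert\chi'_{\varsigma}f\Vert^{2}d\mu(\varsigma)+\int_{\mathcal{J}^{C}}\Vert\xi'_{\varsigma}f\Vert^{2}d\mu(\varsigma)\le M^{2}\,B\,\Vert f\Vert^{2},\qquad M:=\max\!\Bigl\{\tfrac{1+\alpha_{1}}{1-\alpha_{2}},\tfrac{1+\beta_{1}}{1-\beta_{2}}\Bigr\}.
\end{equation*}

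For the lower bound — which is the delicate step — I will apply the integral Minkowski inequality to the perturbation inequality. On $\mathcal{J}$ it gives
\begin{equation*}
\Bigl(\int_{\mathcal{J}}\Vert\chi_{\varsigma}f-\chi'_{\varsigma}f\Vert^{2}d\mu(\varsigma)\Bigr)^{1/2}\le \alpha_{1}\Bigl(\int_{\mathcal{J}}\Vert\chi_{\varsigma}f\Vert^{2}d\mu(\varsigma)\Bigr)^{1/2}+\alpha_{2}\Bigl(\int_{\mathcal{J}}\Vert\chi'_{\varsigma}f\Vert^{2}d\mu(\varsigma)\Bigr)^{1/2}.
\end{equation*}
Combining with the reverse-type inequality $\bigl(\int_{\mathcal{J}}\Vert\chi_{\varsigma}f\Vert^{2}d\mu\bigr)^{1/2}\le \bigl(\int_{\mathcal{J}}\Vert\chi'_{\varsigma}f\Vert^{2}d\mu\bigr)^{1/2}+\bigl(\int_{\mathcal{J}}\Vert\chi_{\varsigma}f-\chi'_{\varsigma}f\Vert^{2}d\mu\bigr)^{1/2}$ and solving produces the $L^{2}$-comparison $\int_{\mathcal{J}}\Vert\chi_{\varsigma}f\Vert^{2}d\mu(\varsigma)\le \bigl(\tfrac{1+\alpha_{2}}{1-\alpha_{1}}\bigr)^{2}\int_{\mathcal{J}}\Vert\chi'_{\varsigma}f\Vert^{2}d\mu(\varsigma)$, and analogously on $\mathcal{J}^{C}$ with factor $\bigl(\tfrac{1+\beta_{2}}{1-\beta_{1}}\bigr)^{2}$.

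Setting $N:=\max\!\bigl\{\tfrac{1+\alpha_{2}}{1-\alpha_{1}},\tfrac{1+\beta_{2}}{1-\beta_{1}}\bigr\}$ and summing the two comparisons, the woven lower bound of the original pair transfers directly:
\begin{equation*}
A\Vert L^{\ast}f\Vert^{2}\le \int_{\mathcal{J}}\Vert\chi_{\varsigma}f\Vert^{2}d\mu(\varsigma)+\int_{\mathcal{J}^{C}}\Vert\xi_{\varsigma}f\Vert^{2}d\mu(\varsigma)\le N^{2}\Bigl(\int_{\mathcal{J}}\Vert\chi'_{\varsigma}f\Vert^{2}d\mu(\varsigma)+\int_{\mathcal{J}^{C}}\Vert\xi'_{\varsigma}f\Vert^{2}d\mu(\varsigma)\Bigr),
\end{equation*}
which yields the universal lower bound $A/N^{2}$. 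Since the bounds hold for every $\mathcal{J}\subset\mathfrak{A}$, $\{\chi'_{\varsigma}\}$ and $\{\xi'_{\varsigma}\}$ are woven continuous $L$-$g$-frames with bounds $A/N^{2}$ and $M^{2}B$. The main technical point will be the correct use of the integral Minkowski inequality together with the reverse triangle argument to invert the perturbation inequality in the $L^{2}$-sense; the assumption $0\le \alpha_{i},\beta_{i}<1$ is precisely what keeps the denominators $1-\alpha_{1}$ and $1-\beta_{1}$ positive.
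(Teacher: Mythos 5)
Your proof is correct and follows essentially the same route as the paper: invert the perturbation inequality into a two-sided comparison between $\Vert\chi_{\varsigma}f\Vert$ and $\Vert\chi'_{\varsigma}f\Vert$ (and likewise for $\xi_{\varsigma},\xi'_{\varsigma}$), then square, integrate over $\mathcal{J}$ and $\mathcal{J}^{C}$, and transfer the woven bounds. The only cosmetic difference is that for the lower bound you run the reverse-triangle argument at the $L^{2}$ level via the integral Minkowski inequality, whereas the paper inverts the inequality pointwise before integrating; both give the same constants $A/N^{2}$ and $M^{2}B$.
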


\begin{proof} For $ f\in\mathcal{H} $, we have
	\begin{equation*}
		\Vert \chi'_{\varsigma}f \Vert-\Vert \chi_{\varsigma}f \Vert\leq \Vert  \chi'_{\varsigma}f - \chi_{\varsigma}f \Vert \leq \alpha_{1} \Vert \chi_{\varsigma}f \Vert +\alpha_{2}\Vert \chi'_{\varsigma}f\Vert .
	\end{equation*}
So 
\begin{equation*}
	\dfrac{1-\alpha_{1}}{1+\alpha_{2}}\Vert \chi_{\varsigma}f\Vert \leq \Vert \chi'_{\varsigma}f\Vert \leq \dfrac{1+\alpha_{1}}{1-\alpha_{2}}\Vert \chi_{\varsigma}f\Vert . 
\end{equation*}	

Similarly, we have 
\begin{equation*}
	\dfrac{1-\beta_{1}}{1+\beta_{2}}\Vert \xi_{\varsigma}f\Vert \leq \Vert \xi'_{\varsigma}f\Vert \leq \dfrac{1+\beta_{1}}{1-\beta_{2}}\Vert \xi_{\varsigma}f\Vert.  
	\end{equation*}
For $ \mathcal{J}\subset \mathfrak{A} $ and for all $f\in \mathcal{H} $,
\begin{align*}
	&min\{(\dfrac{1-\alpha_{1}}{1+\alpha_{2}})^{2},(\dfrac{1-\beta_{1}}{1+\beta_{2}})^{2}\}(\int_{\mathcal{J}}\Vert \chi_{\varsigma}f\Vert ^{2}d\mu(\varsigma)+\int_{\mathcal{J}^{C}}\Vert \xi_{\varsigma}f\Vert^{2}d\mu(\varsigma))\\
&\leq \int_{\mathcal{J}}\Vert \chi_{\varsigma}f\Vert ^{2}d\mu(\varsigma)+\int_{\mathcal{J}^{C}}\Vert \xi_{\varsigma}f\Vert^{2}d\mu(\varsigma)\\
&\leq max\{(\dfrac{1-\alpha_{1}}{1+\alpha_{2}})^{2},(\dfrac{1-\beta_{1}}{1+\beta_{2}})^{2}\}(\int_{\mathcal{J}}\Vert \chi_{\varsigma}f\Vert ^{2}d\mu(\varsigma)+\int_{\mathcal{J}^{C}}\Vert \xi_{\varsigma}f\Vert^{2}d\mu(\varsigma)).
\end{align*}
This completes the proof.
\end{proof}

\begin{corollary} Let $ \{\chi_{\varsigma}\}_{\varsigma \in\mathfrak{A}} $ and $ \{\xi_{\varsigma}\}_{\varsigma \in\mathfrak{A}} $ be woven continuous  $ L-g -$frames and $ \{\chi'_{\varsigma}\}_{\varsigma \in\mathfrak{A}} $, $ \{\xi'_{\varsigma}\}_{\varsigma \in\mathfrak{A}} $ be sequences and $ 0\leq m_{1},m_{2} $ such that 
 for all $f\in\mathcal{H}$ and all $ \varsigma \in\mathfrak{A},
	 $
	 \begin{align*}
	 &	\Vert \chi_{\varsigma} f-\chi'_{\varsigma}f\Vert \leq m_{1}\;min\{\Vert\chi_{\varsigma}f\Vert, \Vert \chi'_{\varsigma}f\Vert\},\\
	 &\Vert \xi_{\varsigma} f-\xi'_{\varsigma}f\Vert \leq m_{2}\;min\{\Vert\xi_{\varsigma}f\Vert, \Vert \xi'_{\varsigma}f\Vert\}.
	 \end{align*}
Then $ \{\chi'_{\varsigma}\}_{\varsigma \in\mathfrak{A}} $ and $ \{\xi'_{\varsigma}\}_{\varsigma \in\mathfrak{A}} $ are woven continuous $ L-g- $frames.	
\end{corollary}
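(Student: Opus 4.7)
The plan is to show that the stated hypothesis is strictly stronger than the $(\alpha_{1},\alpha_{2})$-perturbation condition of Definition 2.8, and then invoke Theorem \ref{t3}. Since the corollary is advertised as a direct consequence of that theorem, the work is essentially to recast the given inequality into the form required by the definition.

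First, I would use the elementary observation that for nonnegative reals $a,b$ one has $\min\{a,b\}\leq a$. Applying this to the first hypothesis yields
\begin{equation*}
\Vert \chi_{\varsigma}f-\chi'_{\varsigma}f\Vert \leq m_{1}\Vert \chi_{\varsigma}f\Vert = m_{1}\Vert \chi_{\varsigma}f\Vert + 0\cdot\Vert \chi'_{\varsigma}f\Vert,
\end{equation*}
so that $\{\chi'_{\varsigma}\}_{\varsigma\in\mathfrak{A}}$ is an $(m_{1},0)$-perturbation of $\{\chi_{\varsigma}\}_{\varsigma\in\mathfrak{A}}$ in the sense of Definition 2.8. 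An identical argument shows that $\{\xi'_{\varsigma}\}_{\varsigma\in\mathfrak{A}}$ is an $(m_{2},0)$-perturbation of $\{\xi_{\varsigma}\}_{\varsigma\in\mathfrak{A}}$.

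Second, assuming the implicit smallness requirement $m_{1},m_{2}<1$ (which is the $\alpha_{1}<1$ hypothesis built into Definition 2.8 and is tacitly needed for the statement to be nontrivial), I would apply Theorem \ref{t3} with $(\alpha_{1},\alpha_{2})=(m_{1},0)$ and $(\beta_{1},\beta_{2})=(m_{2},0)$. Since $\{\chi_{\varsigma}\}_{\varsigma\in\mathfrak{A}}$ and $\{\xi_{\varsigma}\}_{\varsigma\in\mathfrak{A}}$ are assumed to be woven continuous $L$-$g$-frames, Theorem \ref{t3} directly delivers the desired woven $L$-$g$-frame property for $\{\chi'_{\varsigma}\}_{\varsigma\in\mathfrak{A}}$ and $\{\xi'_{\varsigma}\}_{\varsigma\in\mathfrak{A}}$.

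The main obstacle is largely cosmetic: one only has to recognize the two-sided min bound as a disguised one-sided perturbation bound. If a symmetric form is preferred, one could instead use $\min\{a,b\}\leq (a+b)/2$ to obtain an $(m_{1}/2,m_{1}/2)$-perturbation (and analogously for $\xi$), which would broaden the applicable range to $m_{1},m_{2}<2$ before again appealing to Theorem \ref{t3}. Either pathway reduces the corollary to a one-line application of the preceding perturbation theorem.
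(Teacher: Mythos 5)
There is a genuine gap: your reduction only proves the corollary under the extra hypothesis $m_{1},m_{2}<1$ (or $m_{1},m_{2}<2$ in your symmetric variant), whereas the statement — and the paper's proof — allow arbitrary $m_{1},m_{2}\geq 0$. The smallness requirement you call ``implicit'' is not implicit at all; the entire point of putting $\min\{\Vert\chi_{\varsigma}f\Vert,\Vert\chi'_{\varsigma}f\Vert\}$ on the right-hand side is to dispense with it. By discarding one branch of the $\min$ (using only $\min\{a,b\}\leq a$) you throw away exactly the information that makes the unrestricted statement true, and you are then forced to import the $\alpha_{1}<1$ constraint from Definition~2.8, which shrinks the range of validity.

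The missing idea is to use \emph{both} branches of the $\min$ and the triangle inequality to get a two-sided norm equivalence directly. From $\Vert\chi_{\varsigma}f-\chi'_{\varsigma}f\Vert\leq m_{1}\Vert\chi'_{\varsigma}f\Vert$ one gets $\Vert\chi_{\varsigma}f\Vert\leq(1+m_{1})\Vert\chi'_{\varsigma}f\Vert$, and from $\Vert\chi_{\varsigma}f-\chi'_{\varsigma}f\Vert\leq m_{1}\Vert\chi_{\varsigma}f\Vert$ one gets $\Vert\chi'_{\varsigma}f\Vert\leq(1+m_{1})\Vert\chi_{\varsigma}f\Vert$; hence
\begin{equation*}
\frac{1}{1+m_{1}}\Vert\chi_{\varsigma}f\Vert\leq\Vert\chi'_{\varsigma}f\Vert\leq(1+m_{1})\Vert\chi_{\varsigma}f\Vert,
\end{equation*}
and similarly for $\xi,\xi'$ with $m_{2}$, with no upper restriction on $m_{1},m_{2}$. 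Squaring, integrating over $\mathcal{J}$ and $\mathcal{J}^{C}$, and sandwiching as in the proof of Theorem~\ref{t3} then yields universal bounds $A\min\{(1+m_{1})^{-2},(1+m_{2})^{-2}\}$ and $B\max\{(1+m_{1})^{2},(1+m_{2})^{2}\}$. This is exactly the paper's route: it reuses the integration step of Theorem~\ref{t3} but replaces its hypothesis-dependent norm equivalence with the one above, rather than invoking Theorem~\ref{t3} as a black box. Your argument is fine as far as it goes, but to prove the statement as written you must repair it along these lines.
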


\begin{proof}For all $\varsigma \in\mathfrak{A} $ and $ f\in\mathcal{H} $, we have 
	\begin{equation*}
		\dfrac{1}{1+m_{1}}\Vert \chi_{\varsigma}f\Vert \leq \Vert \chi'_{\varsigma}f\Vert \leq (m_{1}+1)\Vert \chi_{\varsigma}f\Vert, 	
	\end{equation*}	
	\begin{equation*}
		\dfrac{1}{1+m_{2}}\Vert \xi_{\varsigma}f\Vert \leq \Vert \xi'_{\varsigma}f\Vert \leq (m_{2}+1)\Vert \xi_{\varsigma}f\Vert. 
	 	\end{equation*}
	By a similar method  to the proof of Theorem \ref{t3},  we have the result.
\end{proof}

\section{Conclusion}

 In this article, we have introduced the atomic $g$-system and we have generalized some of the known results in continuous $L$-frames, weaving continuous and weaving continuous $ g$-frames, also we have studied weaving continuous $ L$-$g$-frames in Hilbert spaces. 
Furthermore, we have  studied the behaviour continuous $ L$-$g$-frames under some perturbations and  we have showed  that approximate $L$-duals are stable under small perturbation and that it is possible to remove some elements of a woven continuous $ L$-$g$-frame and still have a woven continuous $ L$-$g$-frame.

\medskip

\section*{Declarations}

\medskip

\noindent \textbf{Availablity of data and materials}\newline
\noindent Not applicable.

\medskip

\noindent \textbf{Competing  interest}\newline
\noindent The authors declare that they have no competing interests.

\medskip

\noindent \textbf{Fundings}\newline
\noindent  Authors declare that there is no funding available for this article.

\medskip

\noindent \textbf{Authors' contributions}\newline
\noindent The authors equally conceived of the study, participated in its
design and coordination, drafted the manuscript, participated in the
sequence alignment, and read and approved the final manuscript. 

\medskip

\noindent \textbf{Acknowledgements}\newline
\noindent We would like to express our sincere gratitude to the anonymous referee for his/her helpful comments that will help to improve the quality of the manuscript.

\medskip

\end{document}